 \newcommand\MHS{{\mathrm{MHS}}}
\long\def\symbolfootnote[#1]#2{\begingroup%
\def\thefootnote{\fnsymbol{footnote}}\footnote[#1]{#2}\endgroup}
\newcommand\CD{\check{D}}
    \newcommand\Def{\mathop{\rm Def}\nolimits}
\newcommand\Obul{\Omega^\bullet}
\newcommand\bsm{ \begin{smallmatrix}}
\newcommand\bspm{ \left(\begin{smallmatrix}}
\newcommand\esm{\end{smallmatrix} }
\newcommand\espm{\end{smallmatrix} \right)}
\newcommand\bbm{\left[\begin{matrix}}
\newcommand\ebm{\end{matrix}\right]}
\newcommand\bcs{\begin{cases}}
\newcommand\ecs{\end{cases}}
\newcommand{\lrc}[1]{\left\{ #1\right\}}
\newcommand{\lrp}[1]{\left(#1\right)}
 \newcommand\wt[1]{\widetilde{#1}}
  \newcommand\wh[1]{\widehat{#1}}
 \newcommand\V{\mathbb{V}}
\newcommand{\C}{{\mathbb{C}}}
\newcommand\bH{\mathbb{H}}
\newcommand{\M}{\mathbb{M}}
\renewcommand{\P}{\mathbb{P}}
\newcommand{\Q}{{\mathbb{Q}}}
\newcommand{\X}{\mathbb{X}}
\newcommand{\R}{\mathbb{R}}
\newcommand{\Z}{\mathbb{Z}}
\newcommand{\cG}{\mathfrak{g}}
\newcommand{\cI}{{\mathscr{I}}}
\newcommand{\cL}{{\mathscr{L}}}
\newcommand{\cO}{{\mathscr{O}}}
\newcommand{\cU}{{\mathscr{U}}}
\newcommand\bpm{\begin{pmatrix}}
\newcommand\epm{\end{pmatrix}}
\newcommand\mhs{mixed Hodge structure}
\newcommand\hs{Hodge struc\-ture}
\newcommand\Pic{{\mathop{\rm Pic}\nolimits}}
\renewcommand\mod{\mathop{\rm mod}\nolimits}
 \newcommand{\Gr}{\mathop{\rm Gr}\nolimits}
 \newcommand\Aut{\mathop{\rm Aut}\nolimits}
\newcommand\End{\mathop{\rm End}\nolimits}
\newcommand\GL{\mathop{\rm GL}\nolimits}
\newcommand\Rim{\mathop{\rm Im}\nolimits}
\newcommand\Res{\mathop{\rm Res}\nolimits}
 \newcommand{\Alb}{{\rm Alb}}
\newcommand{\Ext}{\mathop{\rm Ext}\nolimits}
 \newcommand{\Hom}{\mathop{\rm Hom}\nolimits}
\newcommand{\Ker}{\mathop{\rm Ker}\nolimits}
 \renewcommand{\part}{\partial}
\newcommand{\la}{{\lambda}}
\newcommand\ga{{\gamma}}
\newcommand\Ga{{\Gamma}}
\newcommand{\La}{{\Lambda}}
\newcommand{\Om}{{\Omega}}
\newcommand{\om}{{\omega}}
\newcommand\vp{\varphi}
\newcommand\sig{\sigma}
\newcommand\bsl{\backslash}
\newcommand\eps{\epsilon}
\newcommand{\lab}{\label}
\newcommand{\hensp}[1]{\enspace\hbox{#1}\enspace}
\newcommand{\opplus}{\mathop{\oplus}\limits}
\newcommand{\ottimes}{\mathop{\otimes}\limits}
  \renewcommand\thesection{\Roman{section}}
\newcommand\bmp[2]{\hbox{\begin{minipage}[c]{#1in}  #2 \end{minipage}}}
\newcounter{demo}[equation]
 \newenvironment{demo}{
 \addtocounter{equation}{1}\refstepcounter{demo} \setcounter{demo}{\value{equation} }
{\smallbreak\noindent (\thesection.\arabic{demo})\enspace   }} 
 \newenvironment{ldemo}{
 \addtocounter{equation}{1}\refstepcounter{demo} \setcounter{demo}{\value{equation} }
{\smallbreak\noindent  \thesection.\arabic{demo} \enspace   }}
 \newtheoremstyle{mytheo}
  {3pt}
  {3pt}
  {\itshape}
  {}
  {\scshape}
  {:}
  {.5em}
  {}
\theoremstyle{mytheo}
\newtheorem{Quest}[equation]{Question}
\newtheorem*{thma}{Theorem A}
\newtheorem*{thmb}{Theorem B}
\newtheorem*{thmc}{Theorem C}
\newtheorem{Cor}[equation]{Corollary}
\newtheorem{Thm}[equation]{Theorem}
  \newtheorem{Conj}[equation]{Conjecture}
\newtheorem{Lem}[equation]{Lemma}
\newtheorem{Prop}[equation]{Proposition}
 \newtheoremstyle{subsect}
 {3pt}
  {3pt}
  {}
  {}
  {\it}
  {\upshape{:}}
  {.5em}
  {}
\theoremstyle{subsect}
\newtheorem{subsec}[equation]{}
\newtheoremstyle{note}
  {3pt}
  {3pt}
  {}
  {}
  {\bfseries}
  {:}
  {.5em}
  {}
\theoremstyle{note}
\newtheorem*{rem}{Remark}
\theoremstyle{remark}
\newcommand\xri[1]{\xrightarrow{#1}}
\newcommand\ol[1]{\overline{#1}}
  \renewcommand\thesection{\Roman{section}}
\renewcommand\TAB@delim[1]{#1}
  \newcommand\bsp[1]{\begin{split} #1 \end{split}}
  \newcommand\beb{\begin{enumerate}[$\bullet$]}
 \newcommand\eeb{\end{enumerate}}
    \newcommand\Gy{\mathop{\rm Gy}\nolimits}
        \newcommand\Nil{\mathop{\rm Nil}\nolimits}
\newcommand\biti[1]{\bibitem[#1]{#1}}
     \newcommand\ssni[1]{\medbreak\noindent {\it #1\/}.}
  \theoremstyle{mytheo}
 \theoremstyle{note}
   \renewcommand\cG{\mathcal{G}}
   \newcommand\Shaf{Shafarevich}
\newcommand\Sh{\mathop{\text{\it Sh}}\nolimits}
\newcommand\wtsh{\wt{\Sh}}
\newcounter{Cequ}
\newenvironment{CEquation}
  {\stepcounter{Cequ}%
    \equation}
  {\endequation}
\begin{document}
      
      \title{Shafarevich mappings and period mappings}
      \author{Mark Green, Phillip Griffiths and Ludmil Katzarkov}
      \begin{abstract}
      We shall show that a smooth, quasi-projective variety $X$ has a holomorphically convex universal covering $\wt X$ when (i) $\pi_1(X)$ is residually nilpotent and (ii) there is an admissable variation of \mhs\ over $X$ whose monodromy representation has a finite kernel, and where in each case a corresponding period mapping is assumed to be proper.
      \end{abstract}
      \dedicatory{To our friend Enrico Arbarello, whose zest for life and enjoyment of mathematics continue to inspire us} 
      
 \address{Department of Mathematics, University of California at Los Angeles,\hfill\break\indent Los Angeles, CA 90095\hfill\break\indent
 {\it E-mail address}\/: {\rm mlg@ipam.ucla.edu}\hfill\break\indent  Institute for Advanced Study, Einstein Drive, Princeton, NJ 08540\hfill\break\indent
{\it E-mail address}\/: {\rm pg@ias.edu}\hfill\break\indent
University of Miami, Coral Gables, FL
{\rm and} Institute of Mathematics\hfill\break\indent and Informatics, Bulgarian Academy of Sciences
 NRU HSE, Moscow\hfill\break\indent
{\it E-mail address}\/: {\rm lkatzarkov@gmail.com}
} 
      \maketitle

      \section*{Outline}
      \begin{enumerate}[I.]
      \item \emph{Introduction and statements of results}
      \item \emph{The nilpotent   case}
      \item \emph{The variation of Hodge structure and mixed Hodge structure cases}
      \item \emph{Further directions}
      \end{enumerate}
      
      \section{Introduction and statements of results}\lab{sec1}
     
     In \cite{Sh72} \Shaf\ posed a beautiful question, a variant of which has become known as Shafarevich's conjecture (cf. \cite{Ko93} and  \cite{Ko95}):
     \begin{subsec}\lab{1.1}
     \emph{Let $X$ be a smooth projective variety.  Is the universal covering $\wt X$ of $X$ holomorphically convex?}\end{subsec}
     This means that there is a Stein analytic variety $\wt S$ and a proper holomorphic mapping $\wt X\to \wt S$ that contracts the connected compact analytic subvarieties of $\wt X$ to points.  This mapping is the Cartan-Remmert reduction of $\wt X$ (\cite{Car79}).
     
     In practice for a quasi-projective variety $X$  one seeks a \emph{\Shaf\ mapping} (\cite{Ko93}, \cite{Ko95} and \cite{Cam94})
     \begin{equation}\lab{1.2}
     \Sh: X\to S\end{equation}
     onto an analytic variety with the properties 
     \begin{enumerate}[(i)] \item If $Y\subset X$ is a connected complex analytic subvariety of $X$ that is contracted to a point by $\Sh$, then $Y$ is compact and the image 
     \[
     \pi_1(Y)\to \pi_1(X)\]
     is a finite subgroup (we shall say that the mapping $\pi_1(Y)\to \pi_1(X)$ is finite).
     \item In the diagram
    \vspace*{-4pt} \begin{equation}\lab{1.3}\bsp{
     \xymatrix@R=1.5pc{\wt X\ar[d]_\pi \ar[r]^{\wt {\Sh}}&\wt S\ar[d]\\
     X\ar[r]^{\Sh}&S}}\vspace*{-4pt}\end{equation}
     the universal cover  $\wt S=\wt X\times_{_{\!X}} S$ is Stein.

      \end{enumerate}
     
     We note that if (i) is satisfied, then the connected fibres of $\wtsh$
     are the connected components of $\pi^{-1}(Y)$ where $Y\subset X$ is a connected fibre of $\Sh$.  Moreover the components of $\pi^{-1}(Y)$ are compact.
     
     The \Shaf\ conjecture has been established when 
     \begin{enumerate}[(a)]
     \item $X$ is projective and $\pi_1(X)$ is residually nilpotent;
     \item $X$ is projective and $\pi_1(X)$ has a faithful linear representation.
     
  \end{enumerate}
  
  The initial  proof of (a) is in \cite{Ka97}.  Subsequently there have been numerous further works (cf.\ \cite{Cam95} and \cite{Cl08}).     The proof of (b) is in \cite{Ey04} and  \cite{EKPR12}; it is the culmination of a series of works drawing on both classical and non-abelian Hodge theory (cf.\ \cite{CM-SP17} and \cite{Si88}, \cite{Si92}).  Some of the history and references to the literature are given in \cite{EKPR12}.  Further related references are \cite{BdO06}, \cite{EF21}, \cite{KR98}, and \cite{Ey09}.

  The purpose of this partly expository paper is to   extend  (a) and a special case of (b) to the case when $X$ is quasi-projective.  The essence of what will be proved here may be informally expressed as saying
     \[     \text{\emph{proper period mappings give \Shaf\ mappings}.}\]
     
In order to understand what is meant by proper for the period mappings to be considered we  introduce some standard terminology and notations.  By a \emph{completion} of $X$ we mean a smooth projective variety $X$ in which $\ol X$ is a Zariski open set with complement $Z=\cup Z_i$ a reduced normal crossing divisor  with $Z_i$ irreducible.
     
     In case (a) there are two choices for a period mapping.  One is the classical Albanese mapping
     \begin{subequations}
     \begin{equation}
     \lab{1.4a}
     \alpha:X\to\Alb(X)\end{equation}
     where $\Alb(X)=H^0(\Om^1_X(\log Z))^\ast/H_1(X,\Z)$ is a semi-abelian variety.  The other choice is to use a higher Albanese mapping (\cite{Ha87a} and \cite{Cl08})
     \begin{equation}
     \lab{1.4b}
     \alpha_s:X\to\Alb^s(X), \quad s=1,2,3,\dots .\end{equation}\end{subequations}
This is a period mapping for a unipotent variation of \mhs\ (UVMHS) (\cite{HZ85}).  It reduces to \eqref{1.4a} when $s=1$, and when $\pi_1(X)$ is residually nilpotent the mappings \eqref{1.4b} are all essentially the same for $s\gg 0$.

The properness of the Albanese mapping \eqref{1.4a} implies that the image of the residue mapping
\[
H^0(\Om^1_{\ol X}(\log Z))\to\opplus^i H^0(\cO_{Z_i}) (-1)\]
projects non-trivially to each summand $H^0(\cO_{Z_i})(-1)$.  These conditions are independent of the completion of $X$.  Additional for each stratum $Z_I:= \cap_{i\in I}Z_i$ with $|I| \geqq 2$ and each ray $\la$ emanating from 
$Z_I^\ast$ out into $X$ there should be an $\om\in H^0(\Om^1_{\ol X}(\log Z)$ having a logarithmic singularity at $\la\cap Z^\ast_I$.  If the second of these conditions is not satisfied, then only the closure of the image of the \Shaf\ map will be Stein.  
From the second of the three proofs of  Theorem A below it will follow that $\alpha$ is proper if, and only if, the $\alpha_s$ are proper.  In fact as a consequence of the proof of Theorem \ref{2.4} below we will  have the 
\begin{Thm} \lab{1.5}
 The images $\alpha_s(X)$ are algebraic varieties and the mapping $\alpha_s(X)\to\alpha(X)$ is a proper surjective morphism.  If $Y\subset X$ is a compact subvariety with $\alpha\big|_Y=$ constant, then $\alpha_s\big|_Y=$ constant for $s\gg 0$.
 \end{Thm}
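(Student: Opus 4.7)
\medbreak\noindent{\bf Proof sketch.}
The strategy hinges on the natural projection $p_s\colon \Alb^s(X)\to \Alb(X)$ obtained from the quotient of the step-$s$ unipotent completion of $\pi_1(X,x_0)$ by its commutator subgroup (equivalently, by the second term of the lower central filtration). Functoriality of the Hain--Zucker construction \cite{HZ85} yields the commutative triangle
\[
\xymatrix@R=1.3pc@C=1.5pc{
X\ar[rr]^-{\alpha_s}\ar[dr]_-{\alpha}&&\Alb^s(X)\ar[dl]^-{p_s}\\
&\Alb(X)&
}
\]
so the induced map $f_s:=p_s|_{\alpha_s(X)}\colon \alpha_s(X)\to \alpha(X)$ is automatically surjective. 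Algebraicity of $\alpha_s(X)$ as a subvariety of $\Alb^s(X)$ follows from the fact that $\alpha_s$ is a period mapping for an admissible UVMHS, combined with the algebraicity of images of admissible period mappings (Bakker--Brunebarbe--Tsimerman; or directly from the algebraicity of $\alpha_s$ in Hain's setup).

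For properness of $f_s$, let $K\subset \alpha(X)$ be compact. The preimage $\alpha^{-1}(K)\subset X$ is compact by the assumed properness of $\alpha$, so
\[ f_s^{-1}(K)=\alpha_s\bp{\alpha^{-1}(K)}\subset \alpha_s(X) \]
is compact as a continuous image of a compact set. This settles the first two assertions.

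For the third, let $Y\subset X$ be a compact subvariety with $\alpha|_Y$ constant, and let $\pi\colon \wt Y\to Y$ be a resolution of singularities; then $\wt Y$ is smooth projective. The hypothesis on $\alpha$ forces the composition $H_1(\wt Y,\Q)\to H_1(X,\Q)$ to vanish. By the Deligne--Griffiths--Morgan--Sullivan formality theorem for compact K\"ahler manifolds, the rational Malcev Lie algebra $\mathfrak{L}(\wt Y)$ of $\pi_1(\wt Y)$ is quadratically presented and, in particular, is topologically generated as a pro-nilpotent Lie algebra by its degree-one piece $H_1(\wt Y,\Q)$. Hence the morphism from $\mathfrak{L}(\wt Y)$ into the step-$s$ truncation $\mathfrak{g}_s$ of the Malcev Lie algebra of $\pi_1(X)$ vanishes on generators and therefore vanishes identically. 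This trivializes the monodromy of the pulled-back UVMHS on $\wt Y$, so the composition $\wt Y \to X\xri{\alpha_s} \Alb^s(X)$ lifts to the universal cover of $\Alb^s(X)$. The latter is Stein (an open subset of the complex nilpotent Lie group whose lattice quotient is $\Alb^s(X)$), and a holomorphic map from the compact $\wt Y$ into a Stein space is constant; hence $\alpha_s|_{\wt Y}$, and so $\alpha_s|_Y$, is constant --- in fact for every $s\geqq 1$, and \textit{a fortiori} for $s\gg 0$.

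The main obstacle lies in the third step: one must verify carefully Hain's construction in the quasi-projective setting in order to identify the monodromy of $\alpha_s$ with the Malcev Lie algebra morphism, and confirm Steinness of the universal cover of $\Alb^s(X)$ in this generality. Once these structural ingredients are in place, formality of compact K\"ahler manifolds together with the compact-into-Stein triviality concludes the argument, and the algebraicity assertion of Step~1 can be invoked as a black box.
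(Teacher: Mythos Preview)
Your approach is different from the paper's, and the crucial third step has a genuine gap.

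The gap: you assert that because $\mathfrak{L}(\wt Y)$ is generated by $H_1(\wt Y,\Q)$ and the composite $H_1(\wt Y,\Q)\to H_1(X,\Q)$ vanishes, the Lie algebra morphism $\mathfrak{L}(\wt Y)\to\mathfrak{g}_s$ ``vanishes on generators.'' This does not follow. The hypothesis says only that lifts of generators of $\mathfrak{L}(\wt Y)$ map into the commutator $[\mathfrak{g}_s,\mathfrak{g}_s]$, not that they map to zero. Formality of $\wt Y$ endows $\mathfrak{L}(\wt Y)$ with a grading, but the morphism to $\mathfrak{L}(X)$ has no reason to respect any grading, so degree-one elements can land anywhere in $\mathfrak{g}_s$. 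What is actually needed is that $\mathfrak{L}(\wt Y)\to\mathfrak{L}(X)$ is a morphism of pro-mixed Hodge structures: then $H_1(\wt Y,\Q)$, being pure of weight $-1$, has image contained in $W_{-1}\mathfrak{L}(X)$; since that image also lies in $[\mathfrak{L}(X),\mathfrak{L}(X)]\subset W_{-2}\mathfrak{L}(X)$, strictness of the weight filtration forces it to be zero. This weight argument is exactly what the paper carries out in its first proof of (I.7), and it is the real Hodge-theoretic content of the step --- formality of $\wt Y$ by itself does not suffice.

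A secondary point: your properness argument for $f_s$ assumes $\alpha$ itself is proper, which is not part of the statement (though it is hypothesized in Theorem~A). The paper proves something stronger, namely that $\alpha_s(X)\to\alpha(X)$ is a \emph{finite} morphism, independent of any properness assumption on $\alpha$. It does this via a general result about admissible VMHS with constant associated graded (Proposition~II.7 and Corollary~II.9): horizontality forces the extension data at levels $\geq 3$ to be locally constant, so only levels $\leq 2$ can vary, and for the tautological UVMHS underlying the higher Albanese maps the level $\leq 2$ data is exactly the ordinary Albanese. This extension-data analysis is both a different route and a sharper conclusion than yours, and the paper reuses it in the proof of Theorem~C.
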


To state the result in case (a) we will use the diagram
\begin{equation}\lab{1.6}
\bsp{
\xymatrix{\wt X\ar[d]_{\pi}\ar[r]^{\wt\alpha}&\wt{\Alb(X)}\ar[d]\\
X\ar[r]^{\alpha}&\Alb(X)}}\end{equation}
where $\wt {\Alb(X)}$ is the universal cover $\Alb(X)$, together with similar diagrams (I.6$_s$) for the higher Albanese mappings.

\begin{thma}
Assume that $\pi_1(X)$ is residually nilpotent and that $\alpha$ is proper.  
\begin{enumerate}[{\rm (i)}]
\item  The mapping $\wt\alpha$ is proper;
\item the image $\wt \alpha(\wt X)$ is Stein;
\item the connected components  of the fibres of $\wt \alpha$ are the connected components of $\pi^{-1}(Y)$ where $Y\subset X$ is a fibre of $\alpha$;
\item the connected components of the fibres of $\alpha$ are characterized by the condition that the map
\[
H^1(X,\Q) \to H^1(Y,\Q)\]
be trivial.\end{enumerate}
\end{thma}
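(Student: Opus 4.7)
The plan is to reduce to the higher Albanese mappings and to exploit the residual nilpotence of $\pi_1(X)$. For each $s\ge 1$ I would form the analogous diagram $({\rm I}.6_s)$ with $\alpha_s:X\to\Alb^s(X)$ and $\wt\alpha_s:\wt X\to\wt{\Alb^s(X)}$. Since $\Alb^s(X)$ is the quotient of a simply connected nilpotent complex Lie group by a lattice, the universal cover $\wt{\Alb^s(X)}$ is biholomorphic to $\C^{N_s}$ for some $N_s$, in particular Stein. By Theorem \ref{1.5}, $\alpha$ is proper if and only if each $\alpha_s$ is proper, and any compact connected $Y\subset X$ with $\alpha|_Y$ constant already satisfies $\alpha_s|_Y$ constant for $s\gg 0$.

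I would establish (iv) first, since it is largely formal. If $Y$ is a connected component of a fibre of $\alpha$, then $Y\hookrightarrow X\xr{\alpha}\Alb(X)$ is constant, and the isomorphism $H^1(\Alb(X),\Q)\simto H^1(X,\Q)$ furnished by the construction of the Albanese forces $H^1(X,\Q)\to H^1(Y,\Q)$ to vanish. For the converse, triviality of this map implies via the Hodge decomposition on a desingularisation $\wt Y\to Y$ that the restriction $H^0(\Omega^1_{\ol X}(\log Z))\to H^0(\Omega^1_{\wt Y})$ is zero; since $Y$ is compact it avoids the boundary divisor $Z$, so the tangent map of $\alpha|_Y$ vanishes identically and the connectedness of $Y$ yields $\alpha|_Y$ constant.

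The heart of the proof, and the main obstacle, is showing that for every connected fibre $Y$ of $\alpha$ the homomorphism $\pi_1(Y)\to\pi_1(X)$ has finite image (trivial modulo torsion). By Theorem \ref{1.5}, $\alpha_s|_Y$ is constant for $s\gg 0$, so the composition $\pi_1(Y)\to\pi_1(X)\to\pi_1(\Alb^s(X))$ vanishes; since $\pi_1(\Alb^s(X))$ is a lattice in the simply connected nilpotent Lie group attached to the $s$-step Malcev completion of $\pi_1(X)\otimes\Q$, the kernel of $\pi_1(X)\to\pi_1(\Alb^s(X))$ agrees modulo torsion with the $(s+1)$-st lower central series term $\Gamma_{s+1}(\pi_1(X))$. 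Intersecting over $s$ and invoking $\bigcap_s\Gamma_{s+1}(\pi_1(X))=\{1\}$ forces the image of $\pi_1(Y)\to\pi_1(X)$ to be torsion, and hence finite. With this in hand, (iii) is immediate: each connected component of $\pi^{-1}(Y)$ is a finite cover of $Y$, hence compact; and the $\pi_1(X)$-equivariance of $\wt\alpha$ relative to $\pi_1(X)\to\pi_1(\Alb(X))$, combined with the discreteness of the fibres of $\wt{\Alb(X)}\to\Alb(X)$, matches these components bijectively with the connected components of fibres of $\wt\alpha$. Then (i) follows from the properness of $\alpha$ together with the compactness of these fibre components, and (ii) follows from (i) because $\wt\alpha(\wt X)$ is thereby a closed analytic subvariety of the Stein manifold $\wt{\Alb(X)}$ and hence Stein.
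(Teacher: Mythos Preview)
Your overall strategy for the central finiteness claim --- use Theorem~\ref{1.5} to get $\alpha_s|_Y$ constant for $s\gg0$, hence the image of $\pi_1(Y)$ lies in $\ker\bigl(\pi_1(X)\to\pi_1(\Alb^s(X))\bigr)$ for every $s$, then invoke residual nilpotence --- is exactly the paper's second of three arguments for~\eqref{2.1}.  The paper also gives a curve-reduction/weight argument extending \cite{Ka97} and a minimal-model/$\partial\bar\partial$-lemma argument; your route coincides with the one the authors describe as ``more in the conceptual framework of general period mappings.''

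There is, however, a gap in your finiteness deduction.  From the vanishing of $\pi_1(Y)\to\pi_1(\Alb^s(X))$ you only obtain that each $g$ in the image satisfies $g^{n_s}\in\Gamma_{s+1}$ for some $n_s$ depending on $s$; residual nilpotence in the form $\bigcap_s\Gamma_{s+1}=\{1\}$ does not let you replace the $n_s$ by a single $n$, so neither ``the image is torsion'' nor ``torsion, hence finite'' is justified as written.  The paper is equally terse at this point; the intended reading is essentially that $\pi_1(X)$ injects, modulo a finite kernel, into its $\Q$-Malcev completion --- this is what the weight argument in the paper's first proof actually uses when it concludes that the circuit class $\gamma$ has finite order from its vanishing in $\wh{\pi_1(X)}$.

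Your treatment of (ii) differs from the paper: you appeal to ``a closed analytic subvariety of the Stein manifold $\C^N$ is Stein,'' whereas the paper writes down the explicit psh exhaustion $\vp=\sum|t_\lambda|^2$ and checks, using the hypothesis that every boundary stratum of $Z$ supports a form with a genuine logarithmic pole, that its pullback exhausts.  That explicit construction is also what the paper uses to handle (i); your justification of (i) (``properness of $\alpha$ plus compactness of the components of $\pi^{-1}(Y)$'') is incomplete, since a single fibre of $\wt\alpha$ consists of \emph{all} components of $\pi^{-1}(Y)$ lying over one lift of $\alpha(Y)$, and these are indexed by the generally infinite set $\ker\bigl(\pi_1(X)\to\pi_1(\Alb(X))\bigr)\big/\mathrm{im}\,\pi_1(Y)$, so compactness of the individual components does not yield compactness of the fibre.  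Your argument for (iv) is correct and more explicit than the paper's, which derives (iv) from (iii) together with~\eqref{1.7}.
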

The substantive parts of this theorem are (ii), (iii) and (iv).  We note that (iii) and (iv) imply
\begin{equation}\lab{1.7}
\pi_1(Y)\to \pi_1(X)\text{ is finite }\iff H^1(X,\Q)\to H^1(Y,\Q)  \text{ is trivial.}\end{equation}
The implication $\Rightarrow$ is clear.  The proof of the converse will use Hodge theory.  One argument will be an extension of that used in \cite{Ka97}.  A second argument, one that is more in the conceptual framework of general period mappings,  will use that the implication $\Leftarrow$ is a consequence of  Theorem \ref{1.5}.    A third argument,  different from the first two, will  describe a    construction of the  minimal model of  $\wh{\pi_1}(X)$ as in \cite{Mo78} and \cite{Ha87a}.  For use elsewhere this construction will give a bound on  the singularities along $Z$ of the differential forms in the minimal model.  We will also give an extension to the quasi-projective case of the classical $\part\ol\part$-lemma.

For case (b) we shall first give a result where we have a standard variation of Hodge structure (VHS) whose underlying local system $\V\to X$ has a monodromy representation
\[
\rho:\pi_1(X)\to \Ga\subset \Aut(V)\]
with a finite kernel (\cite{CM-SP17}).  We denote by
\begin{equation}\lab{1.8}
\Phi:X\to\Ga\bsl D\end{equation}
the corresponding period mapping (loc.\ cit.).  The properness of $\Phi$ is equivalent to the logarithm  $N_i$ of monodromy around each  $Z_i$ being non-zero.  Again this condition is independent of the smooth completion of $X$.  In this case it is well known that  the image
\[
\Phi(X):=P\subset \Ga\bsl D\]
is a closed analytic subvariety of $\Ga\bsl D$.\footnote{It is in fact a quasi-projective algebraic subvariety (\cite{BBT06}).}   We then have a diagram
\begin{equation}\lab{1.9}
\bsp{\xymatrix{\wt X\ar[r]^{\wt\Phi}\ar[d]_\pi& \wt P\subset D\ar[d]\\
X\ar[r]^{\Phi}& P\subset \Ga\bsl D}}\end{equation}
where $\wt P$ is the image in $D$ of $\wt X$.
\begin{thmb}
Assume that $\Phi$ is proper and that $\rho$ has a finite kernel.
\begin{enumerate}[{\rm (i)}]
\item $\wt\Phi$ is a proper holomorphic mapping;
\item $\wt P$ is a Stein subvariety of $D$;
\item the connected fibres of $\wt \Phi$ are the connected components of $\pi^{-1}(Y)$ where $Y\subset X$ is a fibre of $\Phi$;
\item these $Y$ are characterized by the condition that $\rho\big|_{\pi_1(Y)}$ is finite.\end{enumerate}
\end{thmb}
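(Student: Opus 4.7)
\medbreak\noindent
\textbf{Plan.}  The strategy is to transfer everything to the intermediate cover $\overline X := \widetilde X/\ker\rho$, which differs from $\widetilde X$ by a finite unramified covering (finiteness by the hypothesis on $\ker\rho$).  On $\overline X$ the period mapping lifts to a $\Gamma$-equivariant holomorphic map $\overline\Phi\colon \overline X\to D$, and $\widetilde\Phi = \overline\Phi\circ(\widetilde X\to\overline X)$.  For (i) it suffices to prove $\overline\Phi$ is proper, since the remaining factor is a finite cover.  Given a compact $\widetilde K\subset D$ with image $K\subset \Gamma\backslash D$, properness of $\Phi$ makes $\Phi^{-1}(K)$ compact, so a sequence in $\overline\Phi^{-1}(\widetilde K)$ passes to a subsequence whose projection to $X$ converges to some $x_\infty$.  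Writing each term of the sequence in a lifted sheet $\gamma_n\widetilde U$ over an evenly covered neighbourhood $U$ of $x_\infty$ with $\overline\Phi(\widetilde U)$ relatively compact, the proper discontinuity of $\Gamma\subset\Aut(V)$ on $D$ forces the $\gamma_n$ to be finite in number, so a further subsequence converges in $\overline X$.

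The crux is (ii).  By Griffiths transversality and properness of $\overline\Phi$, the image $\widetilde P = \overline\Phi(\overline X)$ is a closed horizontal analytic subvariety of $D$.  The Griffiths--Schmid theory of period domains provides a smooth function $\varphi$ on $D$ whose restriction to any horizontal analytic subvariety is a strictly plurisubharmonic exhaustion---this reflects the negative curvature of the Hodge bundles in horizontal directions.  Then $\varphi\big|_{\widetilde P}$ is a strictly plurisubharmonic exhaustion of $\widetilde P$, which is therefore Stein by Grauert's solution of the Levi problem.

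Parts (iii) and (iv) follow formally.  Equivariance gives $\overline\Phi^{-1}(\Gamma\widetilde y_0) = \bigsqcup_\gamma \overline\Phi^{-1}(\gamma\widetilde y_0)$ as a disjoint union by proper discontinuity, so $\overline\Phi$ takes discrete values on any connected subset of $\pi^{-1}(Y)$ and is constant on its connected components; passing up through the finite cover $\widetilde X\to\overline X$ identifies the connected components of $\pi^{-1}(Y)$ with the connected fibres of $\widetilde\Phi$, giving (iii).  For (iv), the $\Rightarrow$ direction follows because the image of $\pi_1(Y)\to\Gamma$ lies in the isotropy of $\widetilde y\in D$, which is finite (compact isotropy meets the discrete $\Gamma$), whence $\rho|_{\pi_1(Y)}$ has finite image since $\ker\rho$ is finite.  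For $\Leftarrow$, pass to the finite \'etale cover $Y'\to Y$ trivialising $\rho|_{\pi_1(Y)}$, lift the compact $Y'$ to $\overline X$, and apply the maximum principle against $\varphi$ to conclude that the horizontal map $Y'\to D$ is constant; hence $Y$ lies in a fibre of $\Phi$.

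The principal obstacle is (ii): the period domain $D$ is typically not Stein, and the entire content lies in exploiting horizontality via the Griffiths plurisubharmonic exhaustion.  Once that is in hand, parts (i), (iii), and (iv) come out as consequences of proper discontinuity, $\Gamma$-equivariance, and the maximum principle.
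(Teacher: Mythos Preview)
Your proposal is correct and follows essentially the same line as the paper's proof. The only notable difference is cosmetic: where you invoke the ``Griffiths--Schmid'' exhaustion on horizontal subvarieties abstractly, the paper constructs it concretely as the ratio $\varphi = \Omega_D/\Omega_{\check D}$ of the $G_{\mathbb R}$-invariant volume form on $D$ to the $M$-invariant one on the compact dual $\check D$ (citing \cite{GGK13}), and then appeals to Narasimhan \cite{Na62} rather than Grauert for the Levi problem; your maximum-principle argument for the $\Leftarrow$ direction of (iv) is precisely the content of the ``classical result from Hodge theory'' the paper cites without proof.
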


Given our assumptions the main substantive statement in the theorem is (ii).  As will be seen in Section III this will be a direct consequence of classical results about the geometry of period domains.

Theorem B may be summarized by saying that $\Phi$ is a \Shaf\ mapping for $X$.  What is missing is a cohomological, rather than a homotopy-theoretic, description of the fibres of $\wt\Phi$.  One would like some analogue of (iv) in Theorem A.  This will be further discussed in Section III.

Our final main result is an amalgam of Theorems A and B.  To state it we assume given an admissible variation of \mhs\ (VMHS) with the local system $V_M\to X$ having corresponding monodromy representation $\rho_M:\pi_1(X)\to\Ga\subset \Aut(V_M)$ (cf.\ \cite{SZ85} and \cite{Us83}).  We denote by
\begin{equation}\lab{1.10}
\Phi_M:X\to\Ga_M\bsl D_M\end{equation}
the corresponding period mapping where $D_M$ is now a mixed period domain (loc.\ cit.).

\begin{thmc}
Assume that $\Phi_M$ is proper and that $\rho_M$ has a finite kernel.  Then $\Phi_M$ is a \Shaf\ map for $X$.\end{thmc}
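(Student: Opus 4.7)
The plan is to combine Theorems A and B by exploiting the fibration $\wt p: D_M \to D_{\mathrm{gr}} := \prod_k D_k$ of the mixed period domain over the product of pure period domains of the graded pieces $\Gr^W_k V_M$, which are polarized VHS. The fibers of $\wt p$ are complex affine spaces (torsors over a complex unipotent Lie algebra, parametrizing Hodge filtrations lifting a prescribed graded filtration), and $\wt p$ is $\Gamma_M$-equivariant, inducing a pure graded period map $\Phi_{\mathrm{gr}}: X \to \Gamma_{\mathrm{gr}}\bsl D_{\mathrm{gr}}$ with $\Phi_{\mathrm{gr}} = p \circ \Phi_M$; the kernel of $\Gamma_M \twoheadrightarrow \Gamma_{\mathrm{gr}}$ is a discrete subgroup of the unipotent radical of $\Gamma_M$. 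This decomposition realizes $\Phi_M$ as a pure VHS piece (governed by Theorem B) extended by unipotent/nilpotent extension data along its fibers (governed by Theorem A).

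I first verify the fiber condition in the definition of a \Shaf\ map. Let $Y\subset X$ be a connected analytic subvariety with $\Phi_M\big|_Y$ constant. Properness of $\Phi_M$ forces $Y$ to be compact. A connected lift $\wt Y \subset \wt X$ satisfies $\wt \Phi_M(\wt Y)\subset \Gamma_M\cdot x_0$ for some $x_0\in D_M$, and by connectedness of $\wt Y$ together with proper discontinuity the image degenerates to $\{x_0\}$; then the image of $\pi_1(Y)\to\pi_1(X)$ lies in $\mathrm{Stab}_{\Gamma_M}(x_0)$, which is finite, and combined with finiteness of $\ker\rho_M$ this gives $\pi_1(Y)\to\pi_1(X)$ finite. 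For the Stein condition, set $\wt P_M := \wt \Phi_M(\wt X)\subset D_M$ and $\wt P_{\mathrm{gr}}:=\wt\Phi_{\mathrm{gr}}(\wt X)\subset D_{\mathrm{gr}}$. Theorem B---or rather the portion of its proof producing Stein-ness of the image from properness alone, a classical fact for pure period domains---gives $\wt P_{\mathrm{gr}}$ Stein. The restriction $\wt p\big|_{\wt P_M}:\wt P_M\to\wt P_{\mathrm{gr}}$ has fibers that are analytic subvarieties of complex affine spaces, hence Stein. I would then assemble a strictly plurisubharmonic exhaustion on $\wt P_M$ by combining one on $\wt P_{\mathrm{gr}}$ pulled back via $\wt p$ with fiberwise exhaustions supplied by the unipotent extension data.

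The main obstacle is this gluing step. While $\rho_{\mathrm{gr}}$ need not itself have finite kernel, its kernel maps under $\rho_M$ into the unipotent radical of $\Gamma_M$, so the extension data over a fiber of $\Phi_{\mathrm{gr}}$ is governed by a UVMHS whose monodromy is virtually nilpotent---exactly the setting where the higher-Albanese construction of Theorem A applies (after passing to a finite cover that kills $\ker\rho_M$). Coordinating the pure base reduction (Theorem B) with the unipotent fiber reduction (Theorem A), so that the resulting fiber decomposition of $\Phi_M$ satisfies the defining properties \pref{1.3} of a \Shaf\ map and so that the combined exhaustion is strictly plurisubharmonic on all of $\wt P_M$, is the crux of the argument; a secondary technical point is verifying proper discontinuity of $\Gamma_M$ on $D_M$ needed to secure the finiteness of the point stabilizers used above.
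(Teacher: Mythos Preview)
Your overall architecture matches the paper's: factor $\Phi_M$ through the graded pure period map $\Phi$ (your $\Phi_{\mathrm{gr}}$), invoke the Theorem~B argument to get the base image $\wt P_{\mathrm{gr}}$ Stein, and then handle the unipotent extension-data fibers in the spirit of Theorem~A. Your verification of the fiber condition via point stabilizers is different from the paper's route but is valid once proper discontinuity is checked, and your observation that the fibers of $\wt p\big|_{\wt P_M}$ are closed analytic subvarieties of affine spaces, hence Stein, is correct.

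Where your proposal and the paper diverge is in the handling of the fibers, and this is not cosmetic. The paper does \emph{not} work directly with the full fibers of $\Phi_M$ over $\Phi$; instead it interposes the intermediate map $\Phi_{M,2}$ to extension data of levels $\leqq 2$, and uses Proposition~\ref{2.7} and Corollary~\ref{2.9} (horizontality forces the differentials $\Phi_{k,k+1}$ to vanish for $k\geqq 2$) to conclude that $\Phi_M(X)\to\Phi_{M,2}(X)$ is \emph{finite}. This reduces the problem to the fibers of $\Phi_{M,2}$ over $\Phi$, which are concretely images in level-1 and level-2 extension data---i.e.\ in compact complex tori and in $(\C^\ast)^m$'s---so their universal covers are visibly Stein via the Albanese-type construction. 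This is the mechanism by which Theorem~A is actually brought to bear; your invocation of ``the higher-Albanese construction of Theorem~A applies'' gestures at it but does not isolate the level-2 reduction, which is the substantive input.

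The payoff of the paper's reduction is precisely at your acknowledged crux: the ``Stein base with Stein fibers implies Stein total space'' step. As you know, this fails in general (Skoda's counterexamples to Serre's problem), so one cannot simply glue a pulled-back exhaustion with arbitrary fiberwise exhaustions. The paper's level-2 reduction gives enough structure on the fibration $\wt\Phi_M(\wt X)\to\wt\Phi(\wt X)$ that the classical results of \cite{St56}, \cite{Se53}, and the arguments in \S5 of \cite{EKPR12} apply. (The paper is itself somewhat terse here, explicitly flagging that ``technical issues in the theory of complex analytic geometry'' arise and referring to \cite{EKPR12} for their treatment.) Your proposal, by contrast, leaves this step as an obstacle without a mechanism for resolving it. So the gap is not that your strategy is wrong, but that it is missing the one structural lemma---the finiteness of $\Phi_M$ over $\Phi_{M,2}$ coming from horizontality---that turns the crux from an obstacle into a citation.
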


The obvious  idea behind the proof is to consider the factorization
\[
\xymatrix{X\ar[r]^{\Phi_M\quad}\ar[dr]_\Phi&\Ga_M\bsl D_M\ar[d]\\
&\Ga\bsl D}\]
where $\Phi$ is the   period mapping obtained by passing to the associated graded of the \mhs s parametrized by $D_M$.  For this period mapping we have a diagram \eqref{1.9} where the image $\wt \Phi(\wt X)\subset D$ will be a Stein analytic variety.  Note that we are not saying that $\wt \Phi$ is proper.

Along the fibres of $\Phi$ we will have an admissible VMHS such that the associated graded polarized Hodge structures  are locally constant on the irreducible components  of a fibre $Y$.\footnote{This is the situation that one encounters in the Satake-Baily-Borel completion of a classical period mapping (\cite{GGLR20}, \cite{GGR22} and \cite{Gr22}).  In these references a more general version of Theorem 1.5 appears; also given there are further details of some arguments that are only sketched here.}  Then a strengthening of \eqref{1.5} will give that the image $\Phi_M(Y)$ of the fibre will map  finitely to the image under $\Phi_M$ of the extension data of levels $\leqq 2$ (this condition will be explained below).  The proof of Theorem A can be adapted to this situation to show $\wt \Phi(\pi^{-1}(Y))$ is Stein.  We will then have an analytic variety mapping to a Stein analytic variety with Stein fibres.  In the situation at hand the total space can then be shown to be a Stein variety (cf.\ \cite{St56}).

Following the proof in Section III of Theorems B  and C we will discuss the question of describing cohomologically  the fibres of the \Shaf\ mapping in case B.  This leads naturally into the description of the \mhs\ on the completion relative to $\rho$ of $\pi_1(X,x)$ (\cite{Ha98} and \cite{EKPR12}).  The cohomology groups that enter here have one  interpretation  as defining the extension classes of that \mhs, and among these are the classes of first order deformations of the local system $\V\to X$ underlying a VHS.  Both of these are central ingredients in the proof of the main theorem in \cite{EKPR12}.  In \cite{Le19} and \cite{Le21} these results have been extended to the quasi-projective case. 
Here for expository purposes we will discuss very briefly  special cases of  the results in  these references.  

In  Section IV we will discuss some questions and conjectures related to different perspectives on the \Shaf\ conjecture.

\subsection*{Acknowledgements}
We are very grateful  to P.  Eyssidieux, T. Pantev and C. Simpson for useful discussions.

The third  author was partially supported by NSF Grant, Simons
Investigator Award HMS, Simons.
Collaboration Award HMS, National Science Fund of Bulgaria, National Scientific Program ``Excellent Research and People for the Development of European Science'' (VIHREN), Project No. KP-06-DV-7, HSE University Basic
Research Program.

\section{The nilpotent  case} \setcounter{equation}{0}
Referring to the diagram \eqref{1.6} and statement of Theorem A there are two things to be proved:
\begin{align}
\lab{2.1}
\bmp{5.5}{if $Y\subset X$ is a connected fibre of $\alpha$, then the map $\pi_1(Y)\to\pi_1(X)$ is finite;}\\
\bmp{5.5}{there is a plurisubharmonic (psh) exhaustion function $\vp:\wt\alpha(\wt X)\to \R$.}\lab{2.2}\end{align}

For \eqref{2.1} we will give two different arguments.  The first is basically to extend the proof in \cite{Ka97} to the quasi-projective case (cf.\ also \cite{EKPR12} and \cite{Cl08}).  The steps are
\beb
\item reduce to the case where $Y\subset X$ has dimension 1;
\item further reduce to the case where $Y$ has nodes.
\eeb
These steps are essentially the same as in \cite{Ka97} and that are reviewed in Section 3 of \cite{EKPR12}.
\beb
\item analyze the case when $Y$ is a cycle; i.e., the dual graph of $Y$ is topologically an $S^1$.
\[\begin{picture}(40,60)
\put(0,0){\includegraphics{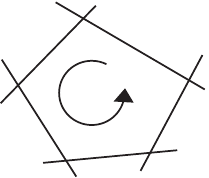}}
\put(22,22){$\ga$}\end{picture}  \]
\eeb
In this case what \eqref{2.1} means is that the inverse image $\pi^{-1}(Y)\subset \wt X$ is not an infinite chain of irreducible curves.  Equivalently, some multiple of the circuit  $\begin{picture}(20,20)\put(0,-5){\includegraphics{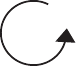}}\put(5,4){$\ga$}\end{picture}$ maps to the identity in $\pi_1(X)$.  The general case of a nodal curve can be done by extending the argument in this case.

Our assumption is that the map $H_1(Y,\Q)\to H_1(X,\Q)$ is trivial.  We want to use this statement about homology to infer one about homotopy.  For this we shall use that the unipotent completions
\[
\widehat{\Z \pi_1}(X;x)=\lim_{s\to\infty} \pi_1(X,x)/J^{s+1}\]
of $\pi_1(X,x)$ and similarly for $Y$ have \mhs s (\cite{Mo78} and \cite{Ha87b}).  Here $\Z {\pi_1}(X,x)$ is the group ring of $\pi_1(X,x)$ and $J$ is the augmentation ideal.  The weights of the generators coming from $H_1(X)$ and $H_1(Y)$ are
\beb
\item $\widehat{\pi_1(X)}$ is generated in weights $-1,-2$ (cf.\ \cite{Mo78});
\item $\widehat{\pi_1(Y)}$ is generated in weights $-1,0$ (cf.\ \cite{CG14}).
\eeb
If $Y=\cup Y_\alpha$, the -1 part comes from the $H_1(Y_\alpha,\Q)$.  The $0$-part, which is only well defined modulo the $-1$ parts, corresponds to the circuit $\ga$.  This may be seen by considering the exact sequence of the pair $(Y,D)$ 
\[
 H_1(Y)\to H_1(Y,D)\xri{\part}H_0(D) \]
where $D$ is the set of nodes.   Here $\ga$ should be considered as a class in $\ker\part$ modulo the images of the $H_1(Y_\alpha)$.  From this weight considerations give that the class of $\ga$ maps to zero in $\wh{\pi_1(X)}$.
   Since $\pi_1(X)$ is assumed to be residually nilpotent we may   conclude that $\ga$ is of finite order in $\pi_1(X)$.\hfill\qed

\medbreak

We will now give the second argument for the proof of \eqref{2.1}.  This argument is more conceptual.  It takes place within the general framework of period mappings and will be used again in the proof of Theorem C.  We begin by recalling from \eqref{1.4b} the higher Albanese mappings constructed in \cite{Ha87a} and \cite{HZ87}.  For each $s\geqq 1$ there is a diagram of mappings
\begin{equation}\lab{2.3}
\bsp{
\xymatrix{&\Alb^s(X)\ar[dd]^{\pi_s}\\
X\ar[ur]_{\alpha_s}\ar[dr]_\alpha&\\
&\Alb(X)}}
\end{equation}
where $\Alb(X)=\Alb^1(X)$ and $\alpha=\alpha_1$ is the usual Albanese mapping.  These maps may be viewed as period mappings associated to unipotent variations of \mhs\   whose underlying local systems are the groups
\[
\V^s_x = \Z \pi_1(X,x)/J^{s+1}.\]
A basic property of the induced mappings in \eqref{2.3} is given by Theorem \ref{1.5} above.  Here   we only need the following part of that result.
 \begin{Thm}\lab{2.4}
For $Y\subset X$ a compact subvariety and $s\gg 0$
\[
\alpha\big|_Y=\text{constant }\iff \alpha_s\big|_Y=\text{constant}.\]
\end{Thm}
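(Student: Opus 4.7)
The implication $\alpha_s|_Y=$ const $\Rightarrow \alpha|_Y=$ const is immediate from the factorization $\alpha = \pi_s \circ \alpha_s$ in \eqref{2.3}. For the converse, recall that $\alpha_s$ is the period mapping of the UVMHS on $X$ with fiber $\V^s_x = \Z\pi_1(X,x)/J^{s+1}$, and $\Alb^s(X)$ is constructed from the truncated rational Malcev completion of $\pi_1(X,x)$ together with its MHS. Constancy of $\alpha_s|_Y$ on a connected compact subvariety $Y$ is therefore equivalent to triviality of the induced MHS morphism
\[
\phi_s \colon \widehat{\pi_1(Y)}/W_{-s-1} \longrightarrow \widehat{\pi_1(X)}/W_{-s-1}
\]
between finite-dimensional rational MHS. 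The hypothesis $\alpha|_Y$ constant corresponds to the vanishing of $\Gr^W_{-1}\phi_s$: namely the induced map $H_1(Y,\Q) \to H_1(X,\Q)$ is zero.

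The plan is to deduce $\phi_s = 0$ by showing every $\Gr^W_k\phi_s$ vanishes; by strictness of MHS morphisms this forces $\phi_s = 0$ on the finite-dimensional truncation. As noted preceding the statement, $\widehat{\pi_1(X)}$ is pronilpotent with weights $\leq -1$ and is Lie-generated in weights $-1, -2$, while $\widehat{\pi_1(Y)}$ for compact projective $Y$ (after a reduction to nodal $Y$ along the lines of the first proof of \eqref{2.1}, if needed) is Lie-generated in weights $-1$ and $0$. Since $\phi_s$ is a Lie algebra morphism, it suffices to check vanishing on the two families of generators of the source; the weight $-1$ generators are handled directly by the hypothesis.

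For a weight $0$ generator $\gamma$ of $\widehat{\pi_1(Y)}$, e.g.\ a cycle in the dual graph when $Y$ is nodal, one has $\phi_s(\gamma) \in W_{-1}$ of the target trivially since $\Gr^W_0\widehat{\pi_1(X)} = 0$. To show vanishing in each successive pure piece, note that $\gamma$ is real and purely of Hodge type $(0,0)$, so $\phi_s(\gamma) \in F^0 \cap \overline{F^0}$; but on a pure Hodge structure of negative weight $-k$ one has $F^0 \cap \overline{F^0} = 0$, since no type $(p,q)$ with $p+q = -k < 0$ and $p,q \geq 0$ is available. Induction on $k$ gives $\phi_s(\gamma) \in W_{-k-1}$ for every $k \leq s$, and hence $\phi_s(\gamma) = 0$ in the truncation. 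Combined with the weight $-1$ case, all Lie generators of the source map to zero, so $\phi_s = 0$.

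The principal obstacle is precisely the weight $0$ step: cycle-type generators of $\widehat{\pi_1(Y)}$ have no analogues in $\widehat{\pi_1(X)}$ (all of whose Hodge weights are negative), yet a priori could contribute nontrivially to deeper weight strata of the target. The combined strictness of MHS morphisms for the weight filtration, strictness for the Hodge filtration, and the reality of actual loop classes in $\pi_1(Y)$ is what rules this out. Together with the reduction to nodal $Y$, this yields $\phi_s = 0$, so that $\alpha_s|_Y$ is constant for every $s \geq 1$, in particular for $s \gg 0$ as asserted.
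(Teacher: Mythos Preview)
Your approach differs from the paper's. The paper proves Theorem \ref{2.4} geometrically: Proposition \ref{2.7} shows, via a transversality computation, that the differential of the map to extension data of level $\geq 3$ vanishes, so (Corollary \ref{2.9}) the higher Albanese map factors up to finite fibres through the level-$\leq 2$ extension data, which is exactly what the ordinary Albanese $\alpha$ records. You instead argue on the side of fundamental groups via the MHS morphism $\phi_s$; this is in spirit the paper's \emph{first} proof of \eqref{2.1}, not its proof of Theorem \ref{2.4}, and it requires the additional (unproved) bridge that $\phi_s=0$ forces $\alpha_s|_Y$ to be constant. That bridge is a statement relating monodromy to constancy of the Hodge filtration; establishing it needs either functoriality of $\Alb^s$ and a factorisation through $\Alb^s(Y)$ (delicate for singular $Y$), or exactly the transversality input you are bypassing.

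There is also a concrete error in the weight-$0$ step. Your claim that the circuit $\gamma$ is ``purely of Hodge type $(0,0)$'', hence lies in $F^0\cap\overline{F^0}$, is false in general: the real loop class lies in $W_0\widehat{\pi_1(Y)}$ and projects to a $(0,0)$-class in $\Gr^W_0$, but it need not lie in $I^{0,0}$. For instance, if $Y$ is a cycle of elliptic curves glued at generic points, $H_1(Y)$ is a nontrivial extension of $\Q(0)$ by $\bigoplus H_1(E_i)$, and no real lift of the weight-$0$ generator lies in $F^0$; thus $\phi_s(\gamma)\in F^0$ is unjustified and your induction on $k$ never starts. The step can be repaired by invoking functoriality of the Deligne bigrading rather than reality: choose Lie-algebra generators of $\widehat{\pi_1(Y)}$ inside the pieces $I^{0,0}, I^{-1,0}, I^{0,-1}$; the $(0,0)$ generator maps into $I^{0,0}$ of the target, which vanishes since $\widehat{\pi_1(X)}$ has only negative weights, while the $(-1,0)$ and $(0,-1)$ generators map into $I^{-1,0}\cap[M,M]$ and $I^{0,-1}\cap[M,M]$, both zero because $[M,M]\subset W_{-2}$. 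But that is a different argument from the one you wrote.
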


The proof of Theorem \ref{2.4} will be a consequence of  a general result about the variation of the extension data in a VMHS that we will now explain and give a sketch of the proof.

Let 
\begin{equation}\lab{2.5}
\Phi:X\to\Ga\bsl D\end{equation}
be any admissible variation of \mhs s whose associated graded is a constant family $\{H^0,\dots,H^n\}$ of weight $k$ Hodge structures $H^k$ on a fixed vector space $\Gr^W_kV$.  What is then varying is the extension data for the \mhs s.  This extension data comes in various levels (cf. \ref{2.8} below),   
 and associated to \eqref{2.5} there are maps
\begin{subequations}
\begin{align}
X &\xri{\Phi_1} \{\text{extension data of level 1}\},\lab{2.6a}\\
\{\text{fibre of }\Phi_1\} &\xri{\Phi_{1,2}} \{\text{extension data of level 2}\},\lab{2.6b}\\
\{\text{fibre of }\Phi_{2}\}&\xri{\Phi_{2,3}} \{\text{extension data of level 3}\}.\lab{2.6c}\\
&\qquad\qquad\vdots\nonumber\end{align}\end{subequations}
The right-hand sides of the above maps are isomorphic to 
\begin{align*}
\text{level 1}&\quad \opplus^\ell \Ext^1_{\MHS} (H^{\ell+1},H^\ell),\\
\text{level 2}&\quad \opplus^\ell \Ext^1_{\MHS}(H^{\ell+2},H^\ell),\\
\text{level 3}&\quad \opplus^\ell \Ext^1_{\MHS}(H^{\ell+3},H^\ell)\\
&\qquad\qquad \vdots\end{align*}
This means that if we pick a point in a fibre, then the difference of the extension data for a variable point and the fixed point is in the $\oplus \Ext^1_{\MHS}(\bullet,\bullet)$'s.  

In more detail we are here using   the general fact that if we have a MHS $M$ with graded pieces $A,B,C$ that are pure \hs s, then there is a fibration 
\[
\begin{matrix}
\text{Extension data for $M$}\\
\downarrow \\
\Ext^1_{\MHS}(B,A)\times \Ext^1_{\MHS}(C,B) \end{matrix}\]
whose fibres have connected components isomorphic to $\Ext_{\MHS}^1(C,A)$.   
To see this we have
\[
\xymatrix{0\ar[r]&B\ar[r]&M/A\ar[r]&C\ar[r]&0\\
0\ar[r]&A\ar[r]&\Ker(M\to C)\ar[r]&B\ar[r]&0}\]
and a master diagram
\[
\xymatrix@R=1.4pc@C=1pc{&0\ar[d]&0\ar[d]&&\\
&A\ar@{=}[r]\ar[d]&A\ar[d]&&\\
0\ar[r]& \Ker(M\to C)\ar[r]\ar[d]& M\ar[d]\ar[r]&C\ar[r]\ar@{=}[d]&0\\
0\ar[r]&B\ar[r]\ar[d]&M/A\ar[r]\ar[d]&C\ar[r]&0\\
&0&0.}\]
 From \cite{Car80} the extension classes are
 \[
 e_{BC}\in \frac{\Hom_\C(C,B)}{F^0 \Hom_\C(C,B)+\Hom_\Z(C,B)},
 \quad e_{AB}\in \frac{\Hom_\C(B,A)}{F^0\Hom_\C(B,A)+\Hom_\Z(B,A)}.\]
 We also have the extension classes
 \[
 e_1\in \frac{\Hom_\C(C,\Ker(M\to C))}{F^0+\Hom_\Z},\quad e_2\in \frac{\Hom_\C(M/A,A)}{F^0+\Hom_\Z}\]
 with
 \[
 e_1\to e_{BC},\quad e_2\to e_{AB}
 \]
 induced respectively by
 \[  \Hom(C,\Ker(M\to C))\to\Hom(C,B),\quad \Hom(M/A,A)\to \Hom(B,A).\]
 Knowing $e_{AB}$ and $e_1$ or $e_{BC}$ and $e_2$ determines $M$ as a MHS. For example,
 \beb
 \item $e_{AB}$ determines $\Ker(M\to C)$ and
 \item $e_1\in \{u\in \Hom(C,\Ker(M\to C))/F^0+H_\Z:u\to e_{BC}$ under $\ker(M\to A)\to B\}$.
 \eeb
 Thus if we know $e_{AB}$ and $e_1$, then we know the $u$ in the second bullet and from this we know~$M$.
 
 If $e_{BC}=0$, then the second bullet reduces to
 \[
 \frac{\Hom_\C(C,A)}{F^0\Hom_\C(C,A)+\Hom_\Z(C,A)}.\]
 In general we obtain a fibre space.
 
 We note that even though the action of monodromy on the associated graded of the MHS's is trivial, its action on extension data of various levels  is abelian and will generally not be trivial.

The key observation is the use of horizontality (transversality) to prove the 

\begin{Prop}\lab{2.7}
For $k\geqq 2$ the differentials of the maps $\Phi_{k,k+1}$ are zero.\end{Prop}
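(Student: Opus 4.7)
The plan is to translate $\Phi_{k,k+1}$ into a Lie-algebraic projection on the mixed classifying space and apply Griffiths transversality level by level. Since the associated graded $\{H^\ell\}$ is constant, locally around a basepoint the VMHS is encoded by a holomorphic map $\psi : X \to U$ into a unipotent group $U$ whose Lie algebra decomposes as
\[
\uu \;=\; \bigoplus_{k\geq 1} \uu_{-k}, \qquad \uu_{-k}\;\subset\;\bigoplus_\ell \Hom(H^{\ell+k},H^\ell),
\]
each $\uu_{-k}$ carrying a pure Hodge structure of weight $-k$ inherited from the polarizations on the graded pieces. Up to Baker--Campbell--Hausdorff corrections involving only strictly lower levels, the level-$k$ extension class is the image of $\log\psi$ in $\uu_{-k}/(F^0\uu_{-k}+\Hom_\Z)$, and the differential $d\Phi_{k,k+1}$ is the projection of $d\psi$ to this quotient at level $k+1$.

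Griffiths transversality for the admissible VMHS gives $d\psi \in F^{-1}\uu\otimes\Omega^1_X$, so level by level $d\psi_j \in F^{-1}\uu_{-j}$, and the condition of being on a fibre of $\Phi_k$ translates to $d\psi_i \in F^0\uu_{-i}$ modulo $\Hom_\Z$ for every $i\leq k$. Using the Maurer--Cartan expansion of the adapted coordinates, the actual differential of the level-$(k+1)$ extension class is represented modulo $F^0\uu_{-(k+1)}$ by
\[
d\psi_{k+1} \;+\; \tfrac12\sum_{\substack{a+b=k+1\\a,b\geq 1}}\bp{[d\psi_a,\psi_b]+[\psi_a,d\psi_b]}.
\]
On the fibre every summand contains a factor $\psi_a$ or $d\psi_a$ with $a\leq k$, already lying in $F^0\uu_{-a}$, so the entire bracket sum lies in $F^0\uu_{-(k+1)}$. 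The Maurer--Cartan equation $d\psi+\tfrac12[\psi,\psi]=0$ applied at level $k+1$ then equates $d\psi_{k+1}$ with the same bracket sum, forcing it too into $F^0\uu_{-(k+1)}$ and yielding $d\Phi_{k,k+1}=0$.

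The main obstacle I expect is the Hodge-bigrading bookkeeping that makes the above argument uniform across all levels. Concretely, the pure piece $\uu_{-k}$ has bigrades $(a,-k-a)$ and $F^{-1}/F^0\uu_{-k}$ isolates the single bigrade $(-1,-k+1)$; at level $k+1\geq 3$ one must verify that the horizontal class arising in bigrade $(-1,-k)$ can always be realised as a bracket of a $(-1,0)$-class (level-$1$ horizontal) with a $(0,-(k-1))$-class (strictly lower level, and thus an $F^0$-representative on the fibre of $\Phi_k$), with admissibility of the VMHS guaranteeing that the identification lands inside $F^0\uu_{-(k+1)}$. The analogous realization at level $2$ would require two factors both in $F^{-1}\setminus F^0$, so no vanishing is forced and $\Phi_{1,2}$ need not have zero differential, consistent with the statement of Proposition~\ref{2.7}.
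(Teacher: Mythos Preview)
Your argument has a genuine gap at the bracket step. You assert that on a fibre of $\Phi_k$ ``every summand contains a factor $\psi_a$ or $d\psi_a$ with $a\leq k$, already lying in $F^0\uu_{-a}$,'' but this is only justified for $d\psi_a$, not for $\psi_a$ itself. Constancy of the level-$a$ extension class means the coset of $\psi_a$ modulo $F^0$ is constant; it does not force $\psi_a\in F^0$. So $[d\psi_a,\psi_b]$ has one factor in $F^0$ and one factor merely in $\uu_{-b}$, and $[F^0,\uu_{-b}]$ need not land in $F^0\uu_{-(k+1)}$. The subsequent appeal to a ``Maurer--Cartan equation $d\psi+\tfrac12[\psi,\psi]=0$'' is also not valid as written: $\psi$ is a $U$-valued \emph{function}, not a flat $\uu$-valued $1$-form, and there is no such identity relating $d\psi_{k+1}$ to brackets of the lower components. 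Your last paragraph essentially acknowledges that the filtration/bracket bookkeeping is incomplete, but the fix you propose (realising the $(-1,-k)$ class as a bracket of a $(-1,0)$ with a $(0,-(k-1))$) is a statement about the tangent space of the target, not about the actual differential of the map, so it cannot by itself force vanishing.

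The paper's mechanism is different and does not run through the bracket structure at all. One observes that along a fibre $X_k$ of $\Phi_{k-1}$, horizontality forces the differential of $\Phi_{k-1,k}$ to land in the single Hodge component of type $(-1,-k+1)$ inside $TE_k$; pulling back the dual of this subspace yields closed logarithmic $1$-forms in $H^0(\Om^1_{\ol X'_k}(\log Z'_k))$, and $\Phi_{k-1,k}$ is given by integrating these modulo the period lattice $\La$. The crucial point is a \emph{weight} argument: the period map factors through $H_1(X_k,\Z)$, which has weights $-1,-2$, whereas the relevant lattice sits in weight $-k$. For $k\geq 3$ these are disjoint, so the periods vanish; the $1$-forms are then exact, hence identically zero (closed logarithmic holomorphic $1$-forms are never exact unless zero), and $\Phi_{k-1,k}$ is locally constant. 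This period/weight step is what is missing from your approach, and it is not recoverable from the purely infinitesimal filtration data you are using.
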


A proof of this result is given in \cite{GGR22} (cf.\ also \cite{Gr22}).  The following is a sketch of the argument.  There is a sequence of period mappings 
\begin{equation}
\lab{2.8}
\Phi_k:X\to\Ga_k\bsl D_k
\end{equation}
  to the extension data of levels $\leqq k$ in \eqref{2.5}.  Denoting by $X_k\subset X$ a  typical  fibre of $\Phi_{k-1}$, we have
 \[
 \Phi_{k-1,k}: X_k \to E_k \]
 where
 \[
 E_k\cong \opplus^\ell \Ext^1_{\MHS}(H^{k+\ell},H^\ell)\]
 with 
 \[
\Ext^1_{\MHS} (H^{k+\ell},H^\ell) =   \frac{\Hom_\C(H^{k+\ell},H^\ell)}{F^0\Hom_\C(H^{k+\ell},H^\ell)+\Hom_\Z(H^{k+\ell},H^\ell)}.\]
 
 By horizontality the differential of $\Phi_{k-1,k}$ gives a map
 \[
 T X_k\to H_k \subset TE_k\]
 where the following diagram depicts the complex tangent spaces $TE_k$ with the subspaces $H_k$ being the part over the red:
\begin{align*}
k=1&\quad \underbrace{(\ell-1,-\ell)\oplus\cdots\oplus (0,-1)}_{F^0} \oplus
\underbrace{ 
	 {\color{red}\underbrace{
	 	{\color{black} (-1,0)}}} \oplus\cdots\oplus (-\ell,\ell-1)}_{TE_1}\\
k=2&\quad \underbrace{(\ell-2,-\ell)\oplus\cdots\oplus (0,-2)}_{F^0} \oplus
\underbrace{ 
	 {\color{red}\underbrace{
	 	{\color{black}(-1,-1)}}} \oplus (-2,0)\oplus \cdots\oplus (-\ell,\ell-2)}_{TE_2}\\
k=3&\quad \underbrace{(\ell-3,-\ell) \oplus\cdots\oplus (0,-3)}_{F^0} \oplus 
\underbrace{ 
	 {\color{red}\underbrace{
	 	{\color{black}(-1,-2)}}}\oplus (-2,-1) \oplus\cdots\oplus (-\ell,\ell-3)}_{TE_3}.
		\end{align*}
Denoting any of the $E_k$ by $E$, as a mapping of real manifolds we have 
\[
T_\R E \cong \R^{2m}\]
and the differential of $\Phi_{k-1,k}$ maps to a subspace $H_\R \subset \R^{2m}$ that is invariant under the complex structure $J:\R^{2m}\to\R^{2m}$.  On the complexification of $\R^{2m}$ given by the action of $J$ the complexification of $H_\R$ is $H\oplus \ol H$ where $H$ is the term over the \raise5pt\hbox{${\color{red}\underbrace{\quad}}$} above.  Denoting by $\La$ the discrete subgroup of $H$ induced  by the $\Hom_\Z(H^{k+\ell},H^\ell)$ terms above we see that
\beb
\item $H/\La$ is a compact complex torus for $k= 1$;
\item $H/\La\cong (\C^\ast)^m$ for $k=2$;\footnote{In this case we have a fibration over a compact, complex torus whose connected components of the  fibres are isomorphic to $(\C^\ast)^m$'s.  This does \emph{not} mean that the total space is a semi-abelian variety.}
\item $H\cong \C^n$ for $k\geqq 3$.
\eeb

The maps (II.6) correspond to maps arising from a diagram
\[
\xymatrix{\pi_1(X_k)\ar[dr]\ar[rr]& &\La\\
&H_1(X_{k},\Z).\ar[ur]&}\]
Using a weight argument this implies that $TX_k$ maps to a sub-Hodge structure of $TE_k$ whose complex part lies under the red. It also means   that for a desingularization $X'_k$ of $X_k$ and smooth completion $\ol X'_k$ with $\ol X'_k\bsl X'_k=Z'_k$ a normal crossing divisor
\beb
\item the pullback by $\Phi_{k-1,k}$ of the dual $H^\ast $ of $H$ will give 1-forms in $H^0(\Om^1_{\ol X'_k}  (\log Z'_k))$, and
\item the mapping $\Phi_{k-1,k}$ will be given by integrating these logarithmic 1-forms taken modulo periods from $H_1(X'_k,\Z)$.
\eeb
For $k\geqq 3$ there are no periods and so $\Phi_{k-1,k}$ is locally constant. \hfill\qed

\begin{Cor}  \lab{2.9} In \eqref{2.5}  the induced map
\[
\Phi(X)\to\Phi_2(X)\]
is a finite morphism of algebraic varieties.\end{Cor}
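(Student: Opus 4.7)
The plan is to deduce Corollary~\ref{2.9} directly from Proposition~\ref{2.7}. By the BBT algebraicity theorem (applied to the period mapping $\Phi$ and to its truncation $\Phi_2$), both $\Phi(X)$ and $\Phi_2(X)$ are quasi-projective algebraic varieties, and the natural projection $\pi\colon \Phi(X)\to \Phi_2(X)$ obtained by forgetting extension data of levels $\geqq 3$ is a morphism of algebraic varieties. The central claim is that $\pi$ has finite fibres; upgrading this to finiteness of $\pi$ itself is then a matter of invoking Stein-factorization in the algebraic category.

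Fix $q\in \Phi_2(X)$, so that $\pi^{-1}(q) = \Phi(\Phi_2^{-1}(q))$. Since $\Phi_2$ is an algebraic morphism of quasi-projective varieties, $\Phi_2^{-1}(q)\subset X$ is a closed algebraic subvariety and hence has only finitely many connected components. I claim that $\Phi$ is constant on each such component $C$. On $C$ the extension data of levels $1$ and $2$ is constant because $\Phi_2|_C = q$. I argue by induction on $k\geqq 3$ that the level-$k$ extension data is also constant on $C$. Assuming the claim through level $k-1$, the subvariety $C$ is contained in a single fibre of $\Phi_{k-1}$, so $\Phi_{k-1,k}|_C$ makes sense; Proposition~\ref{2.7} gives $d\Phi_{k-1,k}|_C = 0$, i.e.\ $\Phi_{k-1,k}|_C$ is locally constant. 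The proof of Proposition~\ref{2.7} identifies the target $E_k$ with $\C^{n_k}$ for $k\geqq 3$ (no periods), so a locally constant map from the connected set $C$ into $E_k$ is in fact constant. By induction $\Phi|_C$ is constant, and summing over the finitely many components of $\Phi_2^{-1}(q)$ shows that $\pi^{-1}(q)$ is finite.

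To upgrade quasi-finiteness to finiteness of $\pi$, observe that by the preceding paragraph the equivalence relation on $X$ of ``lying in a common connected component of a fibre of $\Phi_2$'' coincides with the analogous relation for $\Phi$: one inclusion is automatic from $\Phi_2 = \pi\circ \Phi$, and the other is precisely what was just proved. Consequently $\Phi$ and $\Phi_2$ factor through the same intermediate algebraic variety $Z$ (their common Stein-factorization quotient), and both $Z\to \Phi(X)$ and $Z\to \Phi_2(X)$ are finite and surjective. Since $Z\to \Phi(X)$ is finite (hence proper) and surjective and the composition $Z\to \Phi_2(X)$ is proper, a standard descent argument forces $\pi$ to be proper; combined with the quasi-finiteness just established, $\pi$ is finite.

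The main obstacle is the last step: rigorously carrying out the Stein-factorization argument requires that for an admissible VMHS the quotient of $X$ by connected components of fibres of the period map exist in the algebraic category, even when the mapping is not proper. This is provided by the BBT machinery and its refinements for period mappings at hand, which I would cite rather than reprove.
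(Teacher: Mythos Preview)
Your proof is correct and its core coincides with the paper's: both arguments reduce immediately to Proposition~\ref{2.7} to see that the connected components of the fibres of $\Phi(X)\to\Phi_2(X)$ are points (your inductive unpacking over levels $k\geqq 3$ just spells out what the paper states in one line). Where you diverge is in the packaging of the algebraicity and the passage from quasi-finite to finite: the paper invokes the explicit iterated-integral description of the maps to produce compactifications of $\Phi(X)$ and $\Phi_2(X)$ to which the map extends rationally (deferring details to \cite{GGR22}, \cite{Gr22}), while you cite \cite{BBT06} for algebraicity and then run a Stein-factorization/descent argument. These are different but interchangeable external inputs; your candid flag that the Stein-factorization step needs the non-proper version supplied by the period-map machinery is at the same level of rigor as the paper's own one-sentence deferral.
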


\begin{proof}
It follows from \eqref{2.7} that the connected components of  fibres of the above map are points.  An analysis of how these maps are defined (they are iterated  integrals of logarithmic 1-forms taken modulo periods) shows that both $\Phi(X)$ and $\Phi_2(X)$ may be completed to algebraic varieties   and that $\Phi(X)\to\Phi_2(X)$ extends to a rational map on the completions (cf.\ \cite{GGR22} and \cite{Gr22}).
\end{proof}

\begin{proof}[Proof of Theorem \ref{2.4}] The first basic observation is that the tower of Albanese mappings \eqref{2.3} is \emph{not} the tower obtained by considering a higher Albanese mapping as a period mapping and then taking extension data of level $\leqq k$ for each $k$.  For example, the first level of the Albanese tower is
\[
\Alb(X)=H^0(X,\Om^1_X)^\ast/H_1(X,\Z)\]
and in general $H_1(X,\Z)$ is a \mhs\ with weights $-2,-1$ and for which there is non-trivial extension data (cf.\ \eqref{2.15} below).

A second point is that the period mappings \eqref{2.3} are only defined upon choice of a base point $x_0\in X$.  Thus the relevant Hodge structures are a $\Q$ in weight zero for the point and $H^1(X,\Q)$ for the Albanese variety.  The mapping $\alpha$ in \eqref{2.3} needs both of these (cf.\ the discussion in 5.32 ff.\ in \cite{SZ85}).

  We now change notation and denote by \eqref{2.8} the level of extension data mappings \hbox{associated} to UVMHS given by $\alpha_s:X\to\Alb^s(X)$ for $s\gg 0$.  Then $\Phi_2(X)$ contains the information in $\Alb_{X,x_0} :\allowbreak X\to\Alb(X)$. The $D_k$ in the tower of mappings \eqref{2.8} are obtained from
\begin{enumerate}[(i)]
\item the associated graded of the cohomology groups in $\wh{\pi_1(X,x)}$; and
\item the various levels of extension data among these groups, as explained above.
\end{enumerate}
Since we are dealing with a \emph{unipotent} VMHS, the terms in (i) are constant.  The basic observation from Corollary \ref{2.9} is that the only varying extension data in the period mapping \eqref{2.5} in this case is that arising from $\Alb_{X,x}$.  In terms of extension data arising from the UVMHS the information in $\Alb_{X,x}$ is contained in the mapping $\Phi_2$ to extension data of level $\leqq 2$. Thus 
\[
\quad\bmp{4.75}{\emph{If $Y\subset X$ is a compact subvariety such that $\alpha\big|_Y=\text{constant}$,\\ then $\alpha^s\big|_Y=\text{constant}$.}}\]
Since $\pi_1(X)$ is assumed to be residually nilpotent, this statement implies Theorem \ref{2.4} and~\eqref{1.7}.\end{proof}

\ssni{Proof of \eqref{2.2}}  Choosing a base point $x_0\in X$ and a basis $\om_\la$ for $H^0(\Om^1_{\ol X}(\log Z))$ the Albanese mapping of $X$ is given by
\begin{equation}\lab{2.10}
\alpha(x) = \left(\ldots, \int^x_{x_0}\om_\la,\ldots\right)\text{ modulo periods}.\end{equation}
The assumed completion of $X$ means that at every point $p$ of $Z$ there is an $\om_\la$ having a logarithmic  singularity at $p$.  In more detail, if in coordinates   $Z$ is locally given by
\[
z_i = 0,\qquad i\in I\]
then there is an $\om\in H^0(\Om^1_{\ol X}(\log Z))$ such that
\[
\om = \sum_{i\in I} a_i \frac{dz_i}{z_i} + \text{ holomorphic terms}\]
where all $a_i\ne 0$.
\medbreak

The pullback to $\wt X$ of $\Alb(X)$ is a trivial bundle $\wt X\times \C^N$.  Choosing coordinates $t_\la$ for $\C^N$ the pullback to $\wt X$ of $\om_\la$ is $dt_\la$.  Then by the above
\[
\vp = \sum |t_\la|^2\]
is an exhaustion function on $\wt X$; the projection to $X$ of the sets $\vp<C$ stay a fixed distance from $Z$.  Moreover, the Levi form $\lrp{\frac{i}{2}}\part\ol\part \vp$ is positive in the Zariski tangent spaces to the analytic subvariety $\wt \alpha (\wt X)\subset \C^N$.\hfill\qed

 \begin{subsec} \emph{An interpretation of the Albanese map as an extension class.}
   \lab{2.11} 
   
The following is intended to provide background for the discussion in the next section of the question raised in the introduction of detecting cohomologically the fibres of the \Shaf\ mapping in the VHS case.

We have noted that the analytic group
\[
\Alb(X)=H^0(\Om^1_{\ol X}(\log Z))^\ast/H_1(X,\Z)\]
is a semi-abelian variety that fits in an exact sequence
\begin{equation}\lab{2.12}
0\to T\to \Alb(X)\to \Alb(\ol X)\to 0\end{equation}
where $T\cong (\C^\ast)^m$ is an algebraic torus.  Completing $\Alb(X)$ by replacing the $\C^\ast$'s by $\P^1$'s the Albanese mapping on $X$ extends to a rational mapping; the graph of $\alpha$ in $X\times \Alb(X)$ closes up in $\ol X\times \ol{\Alb(X)}$. \end{subsec}

By way of an interlude we will first describe 
\begin{subsec}\lab{2.13}
\emph{The extension class of the semi-abelian variety $\Alb(X)$.} 

The exact sequence of connected abelian Lie groups \eqref{2.12} is constructed from the exact cohomology sequence
\begin{equation}\lab{2.14}
0\to H^1(\ol X,\Q) \to H^1(X,\Q)\to H^1(T,\Q)\to 0.\end{equation} 
The term in the middle has a \mhs\ with weight filtration $W_1 \subset W_2$ where
\beb
\item $\Gr^W_1 (H^1(X,\Q))=H^1(\ol X,\Q)$;
\item $\Gr^W_2 H^1(X,\Q)\cong H^1(T,\Q) \cong \opplus^m \Q(-1)$.\eeb
The extension class of \eqref{2.14} is in 
\begin{align*}
\Ext^1_{\MHS}(\opplus^m \Q(-1), H^1(\ol X,\Q))&\cong \opplus^m \lrc{
\frac{H^1(\ol X,\C)}{F^1 H^1(\ol X,\C)+H^1(\ol X,\Z)}}\\
&\cong \opplus^m H^1(\ol X,\cO_{\ol X}) / H^1(\ol X,\Z)\\
&\cong \opplus^m \Pic^\circ (\ol X).\end{align*}
From this we may infer that the extension class of \eqref{2.14}    gives  the image in $\Pic^\circ (\ol X)$ of the kernel of the Gysin mapping
\[
  \oplus^i H^0(\cO_{Z_i}) \xri{\mathrm{Gy}} H^2(X,\Z) .\]
\end{subsec}

Next we shall describe 

\begin{subsec}\lab{2.15} \emph{The extension class of $\Alb(X,x_0)$.}  

Thus far we have made  no reference to the choice of a base point $x_0\in X$.  When we include this as in \cite{Ha87b} we obtain a variation of \mhs\ with underlying local system  that we denote by $\V_{x_0}$.  When $X=\ol X$ is projective then for $x\in X$ the fibre $\V_{x_0,x}\cong H^1(X,\{x_0,x\};\Q)$  is a MHS with weight filtration $W_0\subset W_1$ where
\beb
\item $\Gr^W_0 \V_{x_0,x}=\Q$ (constant local system);
\item $\Gr^W_1 \V_{x_0,x}\cong H^1(X,\Q)$.\eeb
The extension class is in
\begin{align*}
\Ext^1_{\MHS}(H^1(X,\Q),\Q) &\cong \frac{H^1(X,\C)^\ast}{F^1 H^1(X,\C)^\ast+ H^1(X,\Z)^\ast}\\
&\cong H^0(\Om^1_X)^\ast/H_1(X,\Z).\end{align*}
It may be identified with the linear function on $H^0(X,\Om^1_X)$ given by the usual  Albanese mapping 
defined for $\om\in H^0(\Om^1_X)$ by 
\begin{equation}\lab{2.16}
\alpha_{x_0}(x)(\om) = \int^x_{x_0}\om\qquad \mod\text{periods}.\end{equation}
As in \cite{HZ87} this defines a canonical UVMHS associated to $(X,x_0)$.

In the quasi-projective case we again have a VMHS with local system $\V_{x_0}\to X$ where 
\begin{equation}\lab{2.17}
0\to \Q\to \V_{x_0}\to H^1(X,\Q) \to 0.\end{equation}
This time $\V_{x_0,x}\cong H^1(X,\{x_0,x\};\Q)$ is a MHS with weight filtration $W_0\subset W_1 \subset W_2$ with
\beb
\item $\Gr^W_0\V_{x_0,x}=\Q$;
\item $W_2 (\V_{x_0,x})/W_0(\V_{x_0,x})\cong H^1(X,\Q)$.\eeb
 \end{subsec}
\begin{Prop}\lab{2.18} The sequence \eqref{2.17} may be constructed by the same method as in \eqref{2.15} where the extension class is given for $\om \in H^0(\Om^1_{\ol X}(\log Z))$ by the Albanese mapping \eqref{2.16}.\end{Prop}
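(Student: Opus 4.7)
The strategy is to mirror the construction of \ref{2.15} by using the long exact cohomology sequence for the pair $(X,\{x_0,x\})$, then to identify the resulting extension class via Carlson's formula \cite{Car80} combined with Deligne's description of the Hodge filtration on a smooth quasi-projective variety. First I would apply the long exact sequence of mixed Hodge structures
\[
0 \to \widetilde H^0(\{x_0,x\},\Q) \to H^1(X,\{x_0,x\};\Q) \to H^1(X,\Q) \to 0
\]
to recognize $\V_{x_0,x}$ as the middle term; as $x$ varies this is an admissible VMHS by \cite{SZ85} and \cite{Us83}. The weight filtration on the fibre is $W_0 = \Q(0)$ coming from the base point, and $W_2/W_0 \cong H^1(X,\Q)$ inherits the weights $1,2$ from the open $X$, matching \eqref{2.17}.

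Next, by Carlson one has
\[
\Ext^1_{\MHS}(H^1(X,\Q),\Q(0)) \cong \frac{H^1(X,\C)^{\vee}}{F^0 H^1(X,\C)^{\vee} + H_1(X,\Z)},
\]
where $F^0 H^1(X,\C)^\vee$ is the annihilator of $F^1 H^1(X,\C)$. Deligne's theorem identifies $F^1 H^1(X,\C) = H^0(\Omega^1_{\overline X}(\log Z))$, so the right-hand side becomes $H^0(\Omega^1_{\overline X}(\log Z))^\vee/H_1(X,\Z) = \Alb(X)$, the semi-abelian variety of \eqref{2.12}. To compute the extension class itself, I would choose a $\Q$-splitting $\sigma$ of \eqref{2.17} represented by a path $\gamma$ from $x_0$ to $x$ viewed as a relative $1$-cycle, together with an $F^\bullet$-preserving $\C$-splitting $\sigma_F$. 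Carlson's recipe expresses the class as the functional $\omega \mapsto \langle\sigma-\sigma_F,\omega\rangle$ on $F^1 H^1$, which evaluates on $\omega \in H^0(\Omega^1_{\overline X}(\log Z))$ to $\int_\gamma \omega$ modulo the periods coming from $H_1(X,\Z)$, exactly matching the Albanese map \eqref{2.16}.

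The only delicate point is that $H^1(X,\Q)$ is genuinely mixed, with a weight-$2$ summand dual to the torus factor $T$ in \eqref{2.12}; one must verify that integration of logarithmic differentials captures this part as well as the abelian quotient $\Alb(\overline X)$. This follows from the identification $F^1 H^1(X,\C) = H^0(\Omega^1_{\overline X}(\log Z))$: the forms $\frac{dz_i}{z_i}$ with nonzero residue pair via integration with the $(\C^\ast)^m$ in \eqref{2.12}, while the holomorphic forms on $\overline X$ pair with $\Alb(\overline X)$. So the logarithmic Albanese integral \eqref{2.10} simultaneously encodes both factors of the extension \eqref{2.12}, and \eqref{2.17} is constructed by verbatim the method of \ref{2.15}.
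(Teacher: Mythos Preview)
Your argument is correct and computes the extension class carefully via Carlson's recipe, which is exactly what the proposition asserts.  The paper's own proof, however, places its emphasis elsewhere: it takes for granted that the Hodge filtration $F^1_x$ on $\C\oplus H^1(X,\C)$ is given by the same formula as in the projective case, and instead singles out as the point requiring verification that the resulting family of mixed Hodge structures is a genuine VMHS --- i.e., that horizontality $\nabla e_x\in F^{-1}$ and conjugate horizontality hold.  You handle this step by citing \cite{SZ85} and \cite{Us83} for admissibility of the relative-cohomology variation, which is legitimate but is precisely the step the paper chooses to check by hand (and the reason is that the paper wants to reuse this verification verbatim later, with $\Q$ replaced by a local system $\V$).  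So your route is more explicit on the extension-class side, while the paper's is more explicit on the transversality side; both are short and each could be completed by the other's remark.
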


   What this means is that the prescription  for constructing the Hodge filtration $F^1_x$ on $\C\oplus H^1(X,\C)$ may be done using the same formulas as in the projective case above.   
It is when we put in the variable point $x\in X$ that we get a varying $F^p_x$ in the VMHS.

It is clear that the $F^1_x$ defines a holomorphic sub-bundle of $\V\otimes \cO_X$.  What must be verified is that horizontality for $F^1$ and conjugate horizontality for $\ol F^1$ are satisfied.  If $e_x\in \frac{H_1(X,\C)}{F^0H_1(X,\C)+H_1(X,\Z)}$ is the extension class, then the horizontality condition is $\nabla e \in F^{-1}H_1(X,\C)$ which  is satisfied in this case.  Conjugate horizontality is done similarly.\hfill\qed\medbreak

We have put this argument in because it will be used verbatim when we replace $\Q$ by $\V$ below.

\ssni{Third proof of I.7 and the $\part\ol\part$-lemma}

From rational homotopy theory it is known that the unipotent completion $\wh{\pi_1}(X)\otimes \Q$ is determined by $H_1(X,\Q)$, $H_2(X,\Q)$ and the map
\begin{equation}
\lab{2.19}
H_2(X,\Q)\to\La^2H_1(X,\Q)\end{equation}
that is dual to the cup product
\begin{equation}
\lab{2.20}
\La^2 H^1(X,\Q)\to H^2(X,\Q)\end{equation}
(cf.\ \cite{Mo78} and the references cited there).  Denoting by $\cL H_1(X,\Q)$ the free Lie algebra generated by $H_1(X,\Q)$, there is a surjection
\begin{equation}
\lab{2.21}
\cL H_1(X,\Q) \to \cL(\wh{\pi_1(X)}\otimes \Q) \end{equation}
where the right-hand side is the Lie algebra/$\Q$ corresponding to the completion of the group algebra of $\pi_1(X)$.  
We will use the construction of the Sullivan minimal model to determine the ideal $\cI$ in $\cL H_1(X,\Q)$ that gives the kernel of the mapping \eqref{2.21}. Both $\cL H_1(X)\otimes \Q$ and $\cI$ are   graded by degree and we denote by $(\bullet)_k$ the part up to degree $k$.  The key points will be that in building $\cI$ step-by-step with $\cI_2 \subset \cL H_1(X,\Q)_2$, $\cI_3\subset \cL H_1(X,\Q)_3,\dots$, at each step we will have a morphism of \mhs s
\[
\cL H_1(X,\Q)_k/\cI_k\to H^2(X,\Q),\]
and then a weight argument will give that $\cI$ is generated in degrees $2,3,4$ (\cite{Mo78}).  When $X=\ol X$ is projective the filtrations of $\cL H_1(X,\Q)$ by degree and weight conincide; this is not the case when $X$ is only quasi-projective.

For
$
\cG_\Q:=\cL H_1(X,\Q)/\cI$
we set $$\cG = \cG_\Q\otimes \C.$$  We next denote by $A^\bullet(\ol X,\log Z)$ the $C^\infty$ log complex generated by the smooth forms on $\ol X$ adjoined by the $dz_i/z_i$'s along $z_1\cdots z_k =0$.  From \cite{GS75} or \cite{Mo78} the inclusion $A^\bullet(\ol X,\log Z)\hookrightarrow A^\ast(X)$ is a quasi-isomorphism so that we have
\[
H^\ast (X;\C)\cong H^\ast (A^\bullet (\ol X,\log Z)).\]Assuming that $\pi_1(X)$ is residually nilpotent and with terms to be explained in the proof we have 
\begin{Prop}[\cite{Ha87b}]\lab{2.22}
There exists a form $\om\in \cG\otimes A^1(\ol X,\log Z)$ that satisfies
\begin{enumerate}[{\rm (i)}]
\item $d\om +\frac12[\om,\om]=0$;
\item integrating $\om$ defines the higher Albanese mapping;
\item the entries of $\om$ give the minimal model of $\wh{\pi_1(X)}\otimes \C$;
\item $\om_{-k}$ has $(\log z)^k$ singularities along $Z$;
and
\item the ideal $\cI\subset \cL H_1(X)$ is generated in degrees $(2,3,4)$.\end{enumerate}
\end{Prop}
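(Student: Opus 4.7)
The strategy is to build $\om$ inductively by weight (equivalently, by degree in the lower central series of $\cG$), following Hain \cite{Ha87b}, while keeping the logarithmic singularities controlled at each stage by choosing primitives in the Hodge-filtered log complex $A^\bullet(\ol X,\log Z)$.

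\emph{Step 1 (the degree $1$ piece).} Pick closed forms $\{\om_\la\}\subset A^1(\ol X,\log Z)$ whose classes give a $\Q$-basis of $H^1(X,\Q)$, split according to the weight filtration $W_1\subset W_2=H^1(X,\Q)$: the weight $1$ part comes from $H^1(\ol X,\Q)$ and the weight $2$ part from residues around the $Z_i$, as in \eqref{2.14}. Let $\{X_\la\}$ be the dual basis of $H_1(X,\C)\subset \cG$ and set
\[
\om_{-1}=\sum_\la X_\la\otimes\om_\la\in \cG\otimes A^1(\ol X,\log Z).
\]
Its entries have at worst simple log poles, i.e.\ $(\log z)^1$ singularities in the sense of (iv).

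\emph{Step 2 (inductive step).} Suppose $\om_{\geq -k}=\om_{-1}+\cdots+\om_{-k}$ solves the Maurer--Cartan equation modulo terms of degree $>k$. The error
\[
\eta_{-k-1}:=d\om_{\geq -k}+\tfrac{1}{2}[\om_{\geq -k},\om_{\geq -k}]
\]
is a closed, degree $k+1$, $\cG$-valued $2$-form in $A^2(\ol X,\log Z)$ whose entries have at most $(\log z)^{k+1}$ singularities, since the bracket of a form with $(\log z)^j$ singularity and one with $(\log z)^{j'}$ singularity has $(\log z)^{j+j'}$ singularity. Its cohomology class $[\eta_{-k-1}]\in H^2(X,\C)\otimes\cG^{(k+1)}$ is the obstruction; we enlarge $\cI$ to kill it and solve $d\om_{-k-1}=-\eta_{-k-1}$ in the log complex. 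The crucial point is that the primitive can be produced by a $\part\ol\part$-type argument on $A^\bullet(\ol X,\log Z)$ compatible with the depth of the logarithmic pole (cf.\ \cite{Mo78}, \cite{GS75}), so the log order grows by at most one, giving $\om_{-k-1}$ with $(\log z)^{k+1}$ singularities and hence (iv).

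\emph{Step 3 (properties (i)--(iii)).} Flatness (i) holds on each finite truncation by construction, hence on the inverse limit. Then $\om$ is a flat connection on the pro-unipotent bundle with fibre $\cG$, whose monodromy is $\wh{\pi_1}(X)\otimes \C$; this uses residual nilpotence to identify this completion with $\cG$. Chen iterated integrals of the entries of $\om$ compute parallel transport, and their $s$-th truncation factors as the higher Albanese map $\alpha_s:X\to\Alb^s(X)$ of \eqref{1.4b}, giving (ii). Under the quasi-isomorphism $A^\bullet(\ol X,\log Z)\hookrightarrow A^\bullet(X)$ of \cite{GS75}, the construction realises the Sullivan minimal model of $X$, yielding (iii).

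\emph{Step 4 (generation of $\cI$ in degrees $2,3,4$).} The obstruction classes $[\eta_{-k-1}]$ lie in $H^2(X,\C)$, which for a smooth quasi-projective variety carries a MHS with weights in $\{2,3,4\}$. A weight count on the morphism of MHS $\La^2 H^1(X,\Q)\to H^2(X,\Q)$ forces new relations only at weights $-2,-3,-4$, i.e.\ in degrees $2,3,4$. This is Morgan's weight argument \cite{Mo78} in its logarithmic form, and gives (v).

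\emph{Main obstacle.} The delicate ingredient is (iv): at each inductive step one must invert $d$ on a closed form with $(\log z)^{k+1}$ singularities without picking up any extra $\log$-power. This requires a refined $\part\ol\part$-lemma on $A^\bullet(\ol X,\log Z)$ that is compatible with the depth of the pole filtration. Once this technical point is established, (i)--(iii) and (v) follow from the standard Chen--Sullivan framework combined with Morgan's weight analysis, and the residual nilpotence hypothesis ensures that the Lie algebra $\cG$ really does recover $\wh{\pi_1}(X)\otimes\C$.
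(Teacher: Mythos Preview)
Your approach matches the paper's: build $\om$ inductively by degree, invoke a $\part\ol\part$-lemma on the log complex (the paper's Lemma~\ref{2.23}) to solve the Maurer--Cartan equation at each step, and use Morgan's weight argument for (v). The paper makes Step~2 more concrete by first observing, via Deligne's result that $H^0(\Om^q_{\ol X}(\log Z))$ injects into cohomology, that the $(2,0)$ and $(0,2)$ parts of the obstruction vanish \emph{as forms}, so the $\part\ol\part$-lemma is only needed on the $(1,1)$ part; one then takes $\om_{-k}=\ol\part f_{-k}$.

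One point in Step~4 needs sharpening. You write that the weight count ``forces new relations only at weights $-2,-3,-4$, i.e.\ in degrees $2,3,4$,'' but in the quasi-projective case degree and weight do \emph{not} coincide on $\cL H_1(X)$, since $H_1(X)$ carries weights $-1$ and $-2$. The paper's argument is that the obstruction map $\cL H_1(X)_k/\cI_k\to H^2(X)$ is a morphism of MHS; on the form side the degree-$k$ piece has weights in $\{k,k+1,\dots,2k\}$ while $H^2(X)$ has weights in $\{2,3,4\}$, so for $k\geq 5$ the ranges are disjoint and the map vanishes. This disjointness, not an identification of weight with degree, is why $\cI$ is generated in degrees $2,3,4$ rather than in degree $2$ alone as in the projective case.
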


\begin{Lem}[$\part\ol\part$ Lemma]\lab{2.23}
Let $\eta\in A^2(\ol X,\log Z)$ be $\part,\ol\part$ closed and with cohomology class $[\eta]=0$ in $H^2(X,\C)$.  Then there exists a function $f\in A^0(X)$ that satisfies
\[
\part\ol\part f=\eta.\]
We may choose this function to have  logarithmic singularities along $Z$.  Any such function is then unique up to $\Gr^W_2H^1(X,\C)$.\end{Lem}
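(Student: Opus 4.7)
The plan is to reduce to the classical K\"ahler $\part\ol\part$-lemma by leveraging Deligne's mixed Hodge theory on the logarithmic de Rham complex. The two ingredients I will use are the quasi-isomorphism $A^\bullet(\ol X, \log Z) \hookrightarrow A^\bullet(X)$ noted just above and the $E_1$-degeneration of the Hodge-to-de Rham spectral sequence for $(\Om^\bullet_{\ol X}(\log Z), F)$, the latter serving as a replacement for harmonic projection since $X$ carries no K\"ahler metric compatible with the compactification $\ol X$.

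Since $[\eta]=0$ in $H^2(X,\C)$, the quasi-isomorphism gives $\eta = d\beta$ for some $\beta \in A^1(\ol X, \log Z)$. The equation $\part\ol\part f=\eta$ forces $\eta$ to be of type $(1,1)$, so decomposing $\beta = \beta^{1,0}+\beta^{0,1}$ and comparing the $(2,0)$ and $(0,2)$ components of $d\beta = \eta$ yields $\part\beta^{1,0}=0$ and $\ol\part\beta^{0,1}=0$. The $E_1$-degeneration allows one to write
\[
\beta^{1,0} = \omega + \part g, \qquad \beta^{0,1} = \ol{\omega'} + \ol\part h,
\]
where $\omega,\omega' \in H^0(\ol X, \Om^1_{\ol X}(\log Z))$ are closed holomorphic log 1-forms and $g,h \in A^0(\ol X,\log Z)$. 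Using $d\omega = 0 = d\ol{\omega'}$, the standard K\"ahler computation collapses the $(1,1)$-part of $d\beta$ to
\[
\eta = \ol\part\beta^{1,0} + \part\beta^{0,1} = -\part\ol\part g + \part\ol\part h = \part\ol\part(h-g),
\]
so $f := h-g$ solves the equation and inherits logarithmic singularities along $Z$.

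For uniqueness, if $\part\ol\part f_1 = \part\ol\part f_2$ then $u := f_1 - f_2$ is pluriharmonic with log singularities, hence $\part u$ is a closed holomorphic log 1-form whose class lies in $F^1 H^1(X,\C)$. The identity $[\part u] + [\ol\part u] = [du] = 0$, together with the fact that $\Gr^W_1 H^1(X,\C)$ is pure of weight one so that $F^1 \cap \ol{F^1} = 0$ in that graded piece, forces the image of $[\part u]$ in $\Gr^W_1$ to vanish. Conversely, any class in $\Gr^W_2 H^1(X,\C)$ is realized by a pluriharmonic $u$ of the form $\sum a_i \log|z_i|^2$, so modulo constants the ambiguity in $f$ is precisely $\Gr^W_2 H^1(X,\C)$.

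The principal obstacle is promoting the $E_1$-degeneration from a cohomological statement to the claim that the potential $g$ has only logarithmic singularities along $Z$. This requires choosing $\omega$ so that its residues along each $Z_i$ match those of $\beta^{1,0}$, a local problem on a polydisk that is resolved using the strictness of the filtrations in Deligne's mixed Hodge complex. Once residues match, $\beta^{1,0}-\omega$ is smooth, so $\part g$ is smooth and $g$ itself lies in $A^0(\ol X,\log Z)$.
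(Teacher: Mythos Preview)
Your argument has two gaps, one minor and one substantive, and the paper proceeds by a different route that sidesteps both.

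The minor gap: the sentence ``the equation $\part\ol\part f=\eta$ forces $\eta$ to be of type $(1,1)$'' is circular as written, since the existence of such an $f$ is exactly what is to be proved. What is needed here is the direct argument the paper gives: $\ol\part\eta^{2,0}=0$ implies $\eta^{2,0}\in H^0(\Om^2_{\ol X}(\log Z))$, and Deligne's result that holomorphic log forms are never exact, together with $[\eta]=0$ and the degeneration of the Hodge filtration, forces $\eta^{2,0}=0$ (and similarly $\eta^{0,2}=0$). Only then does $\part\beta^{1,0}=0$ follow from $\eta=d\beta$.

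The substantive gap is the decomposition $\beta^{1,0}=\om+\part g$. The $E_1$-degeneration is a statement about the holomorphic complex $\Om^\bullet_{\ol X}(\log Z)$; it does not control $\part$-cohomology of the $C^\infty$ log complex at the form level. Your proposed residue-matching fix fails for a concrete reason: the residue of a $\part$-closed $C^\infty$ $(1,0)$-log-form along $Z_i$ is an arbitrary smooth function on $Z_i$, whereas a holomorphic $\om\in H^0(\Om^1_{\ol X}(\log Z))$ has locally constant residues. Take $\ol X=\P^1$, $Z=\{0,\infty\}$, and $\beta^{1,0}=\bar z\,dz/z$: this is $\part$-closed, but $\bar z\,dz/z - c\,dz/z$ is never $\part g$ for smooth $g$ on $\ol X$. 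A potential exists only if one allows $g=\bar z\log|z|^2$, which lives in a strictly larger function space than the one generated by constants times $\log|z_i|^2$; justifying the decomposition in that enlarged space amounts to proving a $\part$-Poincar\'e lemma for a suitably completed log complex, which you have not done and which ``strictness of filtrations'' does not supply.

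The paper avoids this entirely. After showing $\eta=\eta^{1,1}$, it uses the Gysin sequence $\oplus_i H^0(Z_i,\C)(-1)\xrightarrow{\mathrm{Gy}} H^2(\ol X,\C)\to H^2(X,\C)$ to write the class of $\eta$ in $H^2(\ol X,\C)$ as $\sum c_i\,\mathrm{Gy}(1_{Z_i})$, produces explicit potentials for the Gysin classes via Chern forms $g_i=\log\|\rho_i\sig_i\|^2$, subtracts these so that $\alpha:=\eta-\sum c_i\part\ol\part g_i$ is a smooth $(1,1)$-form on $\ol X$ with vanishing class, and then invokes the ordinary compact K\"ahler $\part\ol\part$-lemma to write $\alpha=\part\ol\part h$ with $h$ smooth on $\ol X$. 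The log singularities of $f=h+\sum c_i g_i$ are then manifest.
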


The proof of the lemma will be given following that of Proposition \ref{2.22}.
Before giving that argument we will make some preliminary observations.  The first is that we shall use the standard identifications
\begin{equation}\lab{2.24}
\bcs {\rm (a)}& H^1(X,\C)\cong \bH^1 (\Obul_X(\log Z))\cong \overbrace{H^0(\Om^1_{\ol X}(\log Z))}^{F^1}\oplus H^1(\ol X,\cO_{\ol X})\\[6pt]
{\rm (b)}& H^2(X,\C)\cong \bH^2 (\Obul_{\ol X}(\log Z))\cong \overbrace{\overbrace{H^0(\Om^2_{\wt X}(\log Z))}^{F^2} \oplus H^1(\Om^1_{\ol X}(\log Z))}^{F^1} \oplus H^2 (\cO_{\ol X}).\ecs\end{equation}
In both cases
\[
H^q(\cO_{\ol X})\cong \ol{H^0(\Om^q_{\ol X})}.\]
A result of Deligne is that
\[
H^0(\Om^q_{\ol X}(\log Z))\text{ is a direct summand of }H^1(\ol X,\C);\]
i.e., forms in $H^0(\Om^q_{\ol X}(\log Z))$ are closed and never exact.  A proof of this using $A^\bullet(\ol X,\log Z)$  is in \cite{GS75}.

The Lie algebra $\cG$ is nilpotent and may be realized as a Lie algebra of lower triangular matrices.  We then have
\[
\om=\om_{-1}+\om_{-2} +\om_{-3}+\cdots\]
where $\om_{-k}$ are $1$-forms that on a diagonal  that is $k$-steps below the principal diagonal.  The equation (i) in Proposition \ref{2.22} is then a sequence of equations
\[
d\om_{-k}=-\lrp{\frac{1}{2}} \sum_{i+j=k} \om_{-i}\wedge\om_{-j},\quad k\geqq 0\; \footnotemark
\tag{II.25$_k$}
\]
\footnotetext{For $k=1$ the equation (II.25$_k$) is $d\om_{-1}=0$.}
We will argue that since
\begin{enumerate}[(i)]
\item $\wh{\pi_1}(X)\oplus \Q$ is determined by $H_2(X,\Q)\to\wedge^2 H_1(X,\Q)$;
\item the minimal model is isomorphic to the dual of $\wh{\pi_1}(X)\otimes \Q$; and
\item constructing the minimal model entails solving equations $\alpha=d\beta$\end{enumerate}
we will be able to inductively solve the equations (II.25$_k$).  Using the \mhs s on $H^1(X,\Q)$ and $\ker\{\wedge^2 H^1(X,\Q)\to H^2(X,\Q)\}$, from a weight argument we will see that the only ``new" equations  (II.25$_k$) arise for $k=2,3,4$ (\cite{Mo78}).

\begin{proof}[Proof of Proposition \ref{2.22}]
We take the entries of $\om_{-1}$ to be bases $\vp_i \in H^0(\Om^1_{\ol X}(\log Z))$, $\ol\psi_\alpha\in \ol{H^0(\Om^1_{\ol X})}$.  The (II.25$_k$) is
\[
d\om_{-2}=-\lrp{\frac12}\lrc{ \sum a_{ij}\vp_i\wedge\vp_j+\sum b_{i\alpha} \vp_i\wedge\ol\psi_\alpha+\sum c_{\alpha\beta}\ol\psi_\alpha\wedge \ol\psi_\beta}.\tag{II.25$_2$}\]
The entries of $\om_{-2}$ will correspond to a basis of the kernel of \eqref{2.20}$\otimes \C$.  From (II.24$_b$) the first and third entries in the right-hand side of (II.25$_2$) are zero as differential forms.  This leaves the middle term and by \eqref{2.23} we may take 
\[
\om_{-2}=-\ol\part f_{-2}\;\footnotemark\] \footnotetext{We could also take $\om_{-2}=\part f_{-2}$.  Then $[\om_{-1},\om_{-2}]$ is represented by closed forms with two different $(p,q)$ types.  Since it is supposed to have intrinsic Hodge-theoretic meaning this suggests (but does not prove) that we can solve the equations (II.25$_k$) for $k= 3$.}
and we have defined the part $(\cL H_1(X)_2/\cI_2)^\ast$ of the minimal model up through degree 2.

For the next step (II.25$_3$) is
\[
d\om_{-3} =-[\om_{-1},\om_{-2}].\]
We think of the right-hand side as giving a map
\[
(\cL H_1(X))_3  / \cI_2\cdot H_1(X)\to H^2(X).\]
Since $\pi_1\wh{(X)}\otimes\Q$ is accounted for by the kernel of \eqref{2.21},  this map must be zero  which means we can solve for $\om_{-3}$.  Using Lemma \ref{2.23} we may take 
\[
\om_{-3} = \ol\part f_{-3}.\]
This process continues inductively giving us (i) and (iii).

The proof of (ii) follows from the construction of the higher Albanese varieties in \cite{Ha87a}.

The proof of (iv) will follow from the proof of Lemma \ref{2.23}.

Finally, for (v) the maps
\[
\cL H_1(X)_k/\cI_k\to H^2(X)\]
that are used to inductively to  define  $\om_{-k}$ are morphisms of MHS's.  In (II.25$_k$) the weights of $\om_{-k}$ are the weights on the right-hand side.  In  (II.25$_k$) the correspondence  on weights between the left- and right-hand sides is
\begin{alignat*}{5}
k=2\qquad&&2,3,4&\longleftrightarrow 2,3,4&\\
k=3\qquad&&3,4,5,6&\longleftrightarrow 2,3,4&\\
k=4\qquad&&4,5,6,7,8&\longleftrightarrow 2,3,4&\\
k=5\qquad&&5,6,7,8,9,10&\longleftrightarrow 2,3,4.\end{alignat*}
It follows that these maps are zero for $k\geqq 5$.  Thus for $k\geqq 5$
\[
\cI_k=\cI_5 \{\ottimes^{k-5}H_1(X)\}.\qedhere\]
\end{proof}

\begin{proof}[Proof of Lemma \ref{2.23}]
We will use  \eqref{2.24}(b).  
Writing $\eta=\eta^{2,0}+\eta^{1,1}+\eta^{0,2}$ we have $\ol \part \eta^{2,0}=0$.  Thus
\vspace*{-3pt}\[
\eta^{2,0}\in H^0\lrp{\Om^2_{\ol X}(\log Z)}.\vspace*{-3pt}\]
Since the cohomology class $[\eta]=0$ in $H^2(X,\C)$,   it follows from the  result of Deligne that the differential form $\eta^{2,0}=0$. Similarly, $\eta^{0,2}=0$ and
\vspace*{-3pt}\[
\eta=\eta^{1,1}\in A^{1,1}(\ol X,\log Z)\vspace*{-3pt}\]
is $\part,\ol\part$ closed and with cohomology class
\vspace*{-3pt}\[
[\eta]=0\hensp{in}H^1\lrp{\Om^1_{\ol X}(\log Z)}.\vspace*{-3pt}\]

From the exact sequence 
\[
\oplus H^0(Z_i,\C)(-1)\xri{\rm Gy} H^2(\ol X,\C)\to H^2(X,\C)\]
since $[\eta]$ maps to zero in $H^2(X,\C)$, it follows that in $H^2(\ol X,\C)$
\[
[\eta]=\sum c_i\Gy(1_{Z_i}),\quad c_i\in \C.\]
It is well known, and a proof will be recalled below, that there is $g_i \in A^0(X)$ such that $\part\ol\part g_i$ gives a de~Rham representative of $\Gy(1_{Z_i})$.
  Then
\[
\alpha:= \eta-\sum c_i \part\ol\part g_i\hensp{is a} \part,\ol\part \hensp{closed (1,1) form whose  cohomology class $[\alpha]=0$ in} H^2(\ol X,\C).\]
A standard K\"ahler fact is that then $\alpha=\part\ol\part h$ for some $h\in A^0(\ol X)$.  This follows from the result  that the inclusion
\setcounter{equation}{25}
\begin{equation}\lab{2.26}
A^\bullet (\ol X)\cap \ker\part \hookrightarrow A^\bullet (\ol X)\end{equation}
is a quasi-isomorphism (Section 5 in \cite{DGMS75}). 

For the statement about $\Gy(1_{Z_i})$, in the line bundle $L_i:=[Z_i]$ we choose a metric and a section $\sigma_i\in H^0(\ol X,L_i)$ with divisor $(\sigma_i)=Z_i$.  If $\rho_i$ is a $C^\infty$ bump function with $\rho_i\big|_{Z_i}=1$ and that is compactly supported in a neighborhood of $Z_i$, then
\[
\lrp{\frac{\sqrt{-1}}{2\pi}} \part\ol\part \log \|\rho_i\sigma_i\|^2\]
represents $\Gy(1_{Z_i})$ in $H^2(X,\C)$. 
\end{proof}

The statement about the singularities of $f$ is then clear.

If we have $f$ with $\part\ol\part f=0$, then $\part f\in A^1(\ol X,\log Z)$ gives a cohomology class in $H^1(X)$ such that $\Res[\part f]\in\ker\{\opplus^i H^0(C_{Z_i}) (-1)\to H^2(\ol X)\}\cong \Gr^W_2 H^1(X,\C)$, which implies the uniqueness statement.

Finally to prove Theorem A as in the first argument we may reduce to showing that for a nodal curve $Y=\cup Y_\alpha$ 
\[
H^1(X,\Q)\to H^1(Y,\Q)\hensp{is trivial} \implies \wh{\pi_1}(Y)\to \wh{\pi_1}(X) \hensp{is trivial.}\]
The assumption gives that all
\[
\om_{-1}\big|_{Y_\alpha}=0.\]
From (II.23$_k$) it follows that
\[
d\om_{-2}\big|_{Y_\alpha}=0.\]
From  Lemma \ref{2.23} we have $\om_{-2}=df_{-2}$ where $\ol\part f_{-2}=0$.  Then
\[
\part\ol\part f_{-2}\big|_{Y_\alpha}=0\implies f_{-2}\big|_{Y_\alpha}=\text{constant}\implies \om_{-2}\big|_{Y_\alpha}=0.\]
Proceeding inductively on $k$ we obtain
\[
\om\big|_{Y_\alpha}=0,\]
which implies the result that all maps $\wh{\pi_1}(Y_2)\to \wh{\pi_1}(X)$ are trivial.  From \cite{Ha87b} the higher Albanese mappings
\[
X\to \Alb^s(X)\]
contract all $Y_\alpha$  and hence send $Y$ to a point.\hfill\qed

\begin{rem}
If $\wt \om$ is the pullback of $\om$ to the universal cover $\wt X\xri{\pi} X$, then it is well known that there is a
linear complex Lie group $G$ with Lie algebra $\cG$ and a mapping
\begin{equation}\lab{2.29}
\wt f :\wt X\to G\end{equation}
with
\[
\wt f^\ast (g^{-1}dg)=\wt \om.\]
If $\Ga\subset G$ is the monodromy group of the flat connection $\om$, then \eqref{2.29} induces a map
\[
f:X\to\Ga\bsl G.\]
From \cite{Ha87b} there is a subgroup $F^0G$ such that the induced  map
\[
 X\to \Ga\bsl G/F^0G\]
is the Albanese map for $s\gg 0$.  This gives another argument that
\[
\om\big|_{Y_\alpha}=0\implies \alpha_s(Y)=\text{point}.\]
\end{rem}

Finally one may ask about the singularities of the locally defined matrix-valued function $g$ along $Z$.  Consider the first non-trivial case where locally on $X$
\begin{align*}
g&= \bpm 1&0&0\\ a_1&1&0\\ b&a_2&1\epm\\
\om&= g^{-1}dg=\om_{-1}\oplus \om_{-2}.\end{align*}
From our construction, $\om_{-1}$ has $dz_i/z_i$  and $d\bar z_i/\bar z_i$ terms so that $a_1,a_2$ can have linear terms in  $\log z_i,\ol{\log z_j}$.  From
\[
\om_{-2}=db-a_2da_1\]
and the above construction of $\om_{-2}$, it follows that $b$ can have quadratic terms in $\log z_i,\log \bar z_j$.  In general
\[
\text{$\om_{-k}$ \emph{is a polynomial of degree $k$ in the $\log z_i$'s and their conjugates.}}\]
This illustrates a general result (\cite{Gr22}) about the period matrices of the extension data in a VMHS.

\section{The variation of Hodge structure and \mhs\ cases} \setcounter{equation}{0}

\begin{proof}[Proof of Theorem B] The proof is basically an observation using known results.  

The assumption that the monodromy representation
\[
\rho:\pi_1(X)\to\Aut(V)\]
has a  finite kernel is used in two ways.  One is that the covering of $X$ corresponding to $\pi_1(X)/\ker\rho$ is a finite quotient of the universal covering.  The other is that for a connected compact subvariety $Y\subset X$
\begin{equation}\lab{3.1}
\pi_1(Y)\to \pi_1(X)\text{ is finite }\iff \rho\big|_{\pi_1(Y) }\text{ is finite.}\end{equation}
It is a classical result from Hodge theory that
\[
\rho\big|_{\pi_1(Y)} \text{ is finite } \iff \Phi\big|_Y \text{ is constant.}\]
 Thus in \eqref{1.9} the connected fibres of $\wt \Phi:\wt X\to\wt P$ are the connected components of $\pi^{-1}(Y)$ where \eqref{3.1} is satisfied.

It remains to prove that in \eqref{1.9} the image $\wt \Phi(\wt P)\subset D$ is Stein.  We denote by $\check D$ the compact dual of $D$ (\cite{CM-SP17} and \cite{GGK13}).  Then
\begin{align*}
D&= G_\R/H\\
\cap&\\
\check D&= G_\C/B=M/H\end{align*}
where $G_\R$ is a real semi-simple Lie group, $G_\C$ is its complexification, $H$ is a compact subgroup of $G_\R$ and $M$ is the maximal compact subgroup of $G_\C$.  On $D$ there is a $G_\R$-invariant volume form $\Om_D$, and on $\CD$ there is an  $M$-invariant volume form $\Om_{\CD}$.  From \cite[Lecture 6]{GGK13}  the ratio
\[
\vp=\Om_{D}/\Om_{\CD}\]
is then a function on $D$ with the property   
\begin{quote}
$\vp:D\to\R$ is an exhaustion function and the Levi form $(i/2)\part\ol\part \vp$ is positive in the horizontal sub-bundle $I\subset TD$.
\end{quote}
From the assumed completeness of the period mapping $\Phi$ on $X$ it follows that the image $\wt \Phi(\wt X)=\wt P\subset D$ is a closed analytic subvariety of $D$  whose Zariski tangent spaces lie in $I$.  Consequently
\[
\vp\big|_{\wt P}\text{ is a psh exhaustion function}\]
and then by \cite{Na62} we may conclude that $\wt P$ is a Stein variety.\end{proof}

\begin{proof}[Proof of Theorem C]
We have outlined the proof in Section I; here we will give some further details.  To give a rigorous argument involves technical issues in the theory of complex analytic geometry that we shall not deal with here.  Ones that are similar arise in the projective $X=\ol X$ case and are treated in Section 5 in \cite{EKPR12}.

By the argument just given the image $\wt \Phi(\wt X)=\wt P$ is a Stein variety.  This uses the assumption that $\Phi_M$ is proper, which implies that the monodromies of $\Phi$ around the $Z_i$ are of infinite order.

Using the terminology and notations from Section II, we denote by $\Phi_{M,2}$ the mapping given by taking the associated graded together with the  extension data  of levels $\leqq 2$ in the MHS's described by \eqref{1.10}.  The arguments given in the proof of  \eqref{2.4} apply to the VMHS along the fibres of  $\Phi_M(X)\to \Phi(X)$.  Thus the map
\[
\Phi_M(X)\to \Phi_{M,2}(X)\]
has finite fibres.  Moreover, the fibres of
\[
\Phi_{M,2}(X)\to \Phi(X)\]
are images of the mappings of the fibres to extension data of levels $\leqq 2$.  Due to the assumed completeness of $\Phi_M$ their universal coverings are Stein. From this we see that
\begin{equation}\lab{3.2}
\wt \Phi_M(\wt X)\to \wt \Phi(\wt X) \end{equation}
is holomorphic mapping onto a Stein variety and with Stein fibres.   As in Section 5 of \cite{EKPR12} one may use \cite{Car79} and \cite{Se53} to conclude that the total space is Stein.
 \end{proof}

 \ssni{Comments regarding possible cohomological descriptions of the fibres of $\Phi$; deformations of the monodromy representation $\rho$}  
 
 The connected components $Y\subset X$ of the fibres of a \Shaf\ map are defined homotopy-theoretically by the finiteness of the map $\pi_1(Y)\to \pi_1(X)$.  One may ask if they can be characterized cohomologically as is possible in the nilpotent case by (iv) in Theorem A.  This is an interesting question that we shall now discuss in the case of a VHS.
 
 In order to be able to cohomologically  describe the fibres some cohomology groups must be non-zero, and we note that
 \begin{demo}\lab{new3.3}
 (i) \emph{Given the local system $\V\to X$ underlying a VHS, the group $H^1(X,\End(\V))$ is zero if, and only if, the monodromy representation
 \[
 \rho:\pi_1(X)\to\Aut(V_\C) \]
 is to first order rigid.}
 
 (ii) \emph{The map $H^1(X,\End(\V))\to H^1(Y,\End(\V))$ is  trivial if, and only if, the corresponding first order deformation of $\rho$ induces a trivial deformation of $\rho\big|_{\pi_1(Y)}$.}\end{demo}
 
 As a first step we will reformulate the cohomological criterion in the nilpotent case.  Keeping our assumption that $X=\ol X\bsl Z$ we denote by $\cO( \widehat{\pi_1(X,x))}:= \lim_s \cO(\pi_1(X,x)/J^{s+1})$ the Hopf algebra of functions on the unipotent completion of $\pi_1(X,x)$.  This algebra has a \mhs\ that is generated by $H^1(X,\Q)$ viewed as linear functions on $\pi_1(X,x)$ (\cite{Ha87b}).  For  $Y$ a normal crossing variety the induced mapping
 \[
 \widehat{\pi_1(Y,y)} \to \widehat{\pi_1(X,x)},\qquad f(y)=x\]
 of a map $f:Y\to X$  
 is trivial if, and only if, the pullback map
 \[
 f^\ast:\widehat{\cO(\pi_1(X,x))}\to \widehat{\cO(\pi_1(Y,y))} \]
 is zero.  This happens when this map is trival on generators; i.e., when
 \[
 H^1(X,\Q)\xri{f^\ast} H^1(Y,\Q)\]
 is zero.  We have used this in the proof of Theorem A.
 
 In the VHS case we let $G\subset \Aut(V)$ be the $\Q$-Zariski closure of the image of the monodromy representation.  Then $G$ is a semi-simple $\Q$-algebraic group.  For the purposes of this discussion we shall assume that it is simple and that the members of the set $\{V_\la\}$ of irreducible $G$-modules all arise as   sub-quotients of tensor products of $V$.  Then each $V_\la$ gives a local system $\V_\la\to X$ that underlies a VHS.
 
One now uses the completion of ${\pi_1(X,x)}$ relative to $\rho$ as in \cite{Ha98}.  Following the notations there, except that here we use $G$ instead of $S$, the relative completion is a pro-algebraic group $\cG$ that fits in a diagram
 \begin{equation}\lab{3.3}
 \bsp{
 \xymatrix{
 	1\ar[r]&\cU\ar[r]& \cG\ar[r]&G\ar[r]&1\\
	&&\pi_1(X,x)\ar[u]_{\wt \rho}\ar[ur]^\rho}}\end{equation}
where $\cU$ is the unipotent radical of $\mathcal{G}$.  The Hopf algebra $\cO(\mathcal{G})$ of regular functions on $\mathcal{G}$ has a functorial \mhs\ with non-negative weights and relative to which the Hopf algebra operations are morphisms.

The regular functions $\cO(G)$ may be realized as the matrix coefficients of the above representations.\footnote{From here on in this section we will work over $\C$ and will set $G_\C=G$, $\V_\la,\C =\V_\la$ etc.}
  We then have
\[
\cO(G)\cong \opplus^\la V^\ast_\la \otimes V_\la\]
  where $G$ acts on both the left and right.  Referring to \eqref{3.3} we have an inclusion $\cO(G)\hookrightarrow \cO(\cG)$ and
\begin{equation}\lab{3.4}
\cO(G) =\Gr^W_0 (\cO(\cG)).\end{equation}
From \cite{Ha98} it may be inferred that
\begin{equation}\lab{3.5}
\Gr^W_1 (\cO(\cG)) \cong \opplus^\la H^1(X,\V^\ast_\la)\otimes V_\la.\end{equation}
The reason that  $\V^\ast_\la$ is  inside the parenthesis and $V_\la$ is outside reflects the left and right actions of $G$ on $\cO(G)$ (cf.\ loc.\ cit.).

Given an inclusion $j:Y\hookrightarrow X$ from \eqref{3.3} and \eqref{3.4} the condition that
\begin{equation}\lab{3.6}
\wt \rho\circ j:\pi_1(Y)\to\cG\end{equation}
be trivial is
\[
(\wt \rho\circ j)^\ast \cO(\cG)=\text{ constant functions.}\]
From \eqref{3.5} this implies that all restriction mappings
\begin{CEquation}\lab{3.7}
H^1(X,\V_\la)\to H^1(Y,\V_\la)\end{CEquation}
are zero.  

\begin{Quest} \lab{3.8} If \eqref{3.7} holds for all non-trivial $V_\la$, then does \eqref{3.6} hold?
\end{Quest}

A positive answer would necessitate that
\begin{equation}
\lab{3.9} H^1(X,\V) \ne 0\end{equation}
for some $V=V_\la$.  This group has a MHS with  the part not in $H^1(\ol X,j_\ast \V)$ having a description involving the part of $\V$ that is not invariant under monodromy around   the $Z_i$'s (cf.\ \cite{Ha98} and \cite{Le21}).  We will not get into this here.  \emph{For the remainder of Section III we will assume that $X=\ol X$ is projective.}  Then $H^1(\ol X,\V)$ has a Hodge structure with 
\begin{equation}\lab{3.10}
F^1 H^1(X,\V)=H^0(\Om^1_{\ol X}).\end{equation}
   The non-zero vector space \eqref{3.10} gives rise to an interesting construction that we now shall discuss  (cf.\ loc.\ cit.\ for a related construction of locally constant iterated integrals).

\begin{subsec}\lab{3.11}
\emph{Twisted Albanese map.} 

We first note that for any local system $\V\to X$ and $\om\in H^0(\Om^1_{\ol X} \otimes \V)$ the exterior derivative $d\om$ is well defined.  If $\V\to   X$ underlies a VHS, then it may be shown that $d\om =0$.
\end{subsec}

Now let $x_0\in X$ and $\wt x_0\in\wt X$ be base points with $(\wt X,\wt x_0)\xri{\pi}(X,x_0)$.  Then canonically
\[
\pi^\ast \V \cong \wt X\times V.\]
If $\om$ is as  above with $\pi^\ast \om = \wt \om$, then $\wt \om \in H^0(\Om^1_{\wt X}\otimes \V)\cong H^0(\Om^1_{\wt X})\otimes  V$ and since $d\wt \om = 0$,
\begin{equation}\lab{3.12}
\int^{\wt x}_{\wt x_0} \wt \om \in V\end{equation}
is well defined.
Taking into account the action of $\pi_1(X,x)$ as deck transformations we may descend \eqref{3.12} to $X$ to have a well-defined mapping 
\begin{equation}\lab{3.13}
\Alb_{X,\V}: X\to H^0(\Om^1_X  \V)^\ast\otimes V_{x_0}/\wt\rho(\pi_1(X,x_0)).\end{equation}
This is the \emph{twisted Albanese mapping} referred to above.

We note that
\begin{equation}\lab{3.14}
\text{\emph{\eqref{3.7} is equivalent to $\Alb_{X,\V_\la}\big|_Y=$ constant.}}\end{equation}
Consequently an affirmative answer to \eqref{3.8} would mean that we have
\[
\rho\big|_{\pi_1(Y)} \text{ is finite $\iff \Alb_{X,\V_\la}\big|_Y=$ constant for all $V_\la$.}\]

\demo{\em Extensions of \mhs s and the first step in the construction of the \mhs\ on a relative completion.}\lab{3.15}

This   is an extension to the relative completion of a representation of  the fundamental group of the discussion in \eqref{2.13}, \eqref{2.15} and of Proposition \ref{2.18}.

\begin{Prop}\lab{3.16}
Let $\V\to X$ be a local system underlying a VHS where $\V_\la$ has weight zero.  Choosing a base point $x_0\in X$ there is for each $x\in X$ a canonical \mhs\ $\V_{1,x}$ with
\beb
\item $\Gr^W_0(\V_{1,x})=\C$;
\item $\V_{1,x}/W_0(\V_{1,x})\cong V^\ast_{x_0}\otimes  H^1(X,\V)$
\eeb
and that is constructed using $\Alb_{X,\V}(x)$ as extension class.  Letting $x$ vary we obtain an admissible VMHS.
     \end{Prop}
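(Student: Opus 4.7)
The plan is to mimic the construction in Proposition \ref{2.18} verbatim, but with the coefficient system $\mathbb{Q}$ replaced by the VHS local system $\V$, using the twisted Albanese mapping $\Alb_{X,\V}$ from \eqref{3.13} in the role of the classical Albanese mapping. Because $X=\bar X$ is projective (the standing assumption for the remainder of Section III) and $\V$ underlies a pure weight-$0$ VHS, the cohomology group $H^1(X,\V)$ carries a pure Hodge structure of weight $1$, and $F^1 H^1(X,\V)=H^0(\Om^1_X\otimes\V)$ by \eqref{3.10}. Via Carlson's formula this yields the identification
\[
\Ext^1_{\MHS}\bigl(V^\ast_{x_0}\otimes H^1(X,\V),\,\C\bigr)\cong \frac{H^0(\Om^1_X\otimes\V)^\ast\otimes V_{x_0}}{\Hom_\Z\bigl(V^\ast_{x_0}\otimes H^1(X,\V),\C\bigr)},
\]
and the right-hand side is precisely the target of $\Alb_{X,\V}$.

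First I would define the underlying local system $\V_1\to X$: as a pointwise vector space it sits in the sequence
\[
0\to \C \to \V_{1,x}\to V^\ast_{x_0}\otimes H^1(X,\V)\to 0,
\]
and its monodromy representation is the one encoded by the twisted Albanese mapping. Concretely, on the universal cover one trivializes $\pi^\ast\V_1$ as $\wt X\times(\C\oplus V^\ast_{x_0}\otimes H^1(X,\V))$, and the $\pi_1(X,x_0)$-action is upper-triangular with unipotent part read off from the cocycle $\int_{\wt x_0}^{\ga\cdot\wt x_0}\wt\om$; this produces an honest local system because the primitive $\int\wt\om$ is well-defined on $\wt X$ (since $d\wt\om=0$, as noted in \ref{3.11}). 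The weight filtration is $W_0=\C$ and $W_1=\V_1$, with $\Gr^W_0=\C$ and $\Gr^W_1\cong V^\ast_{x_0}\otimes H^1(X,\V)$ constant, both pure.

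Next I would define the Hodge filtration $F^\bullet\V_{1,x}$ fibrewise by specifying the lift of $F^\bullet$ from the graded quotient using $\Alb_{X,\V}(x)$ as the Carlson extension class. Holomorphicity in $x$ is immediate from \eqref{3.12}: the cocycle $\int^{\wt x}_{\wt x_0}\wt\om$ is holomorphic in $\wt x$ because $\wt\om\in H^0(\Om^1_{\wt X})\otimes V$. The substantive content is Griffiths transversality, and exactly as in the last paragraph of the proof of Proposition \ref{2.18} this reduces to checking that $\nabla e_x\in F^{-1}$ of the relevant $\Hom$ space. Computing $\partial_\xi\Alb_{X,\V}(x)(\om)=\om(\xi)$ for $\xi\in T_xX$ and $\om\in H^0(\Om^1_X\otimes\V)=F^1 H^1(X,\V)$, the derivative lies in $\V_x$ which, using the Hodge filtration on $\V$ (trivial at the top since $\V$ has weight $0$ and Hodge level determined by the VHS), lands in the correct piece of $F^{-1}$; conjugate horizontality is verified identically.

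The main obstacle is this horizontality check, and specifically reconciling the Hodge types of the derivative with the Hodge filtration on $H^1(X,\V)^\ast$, since unlike the classical case the fibre $\V_x$ itself has nontrivial Hodge filtration varying with $x$. The argument is essentially a bookkeeping exercise in Carlson's description of $\Ext^1_{\MHS}$ plus Griffiths transversality for $\V$; admissibility is automatic in the projective case. Once these are in place, letting $x$ vary produces the admissible VMHS, completing the proof; the analogous statement for quasi-projective $X$ would require the log-growth bounds at $Z$ and would parallel the quasi-projective half of Proposition \ref{2.18}.
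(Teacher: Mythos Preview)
Your proposal is correct and follows essentially the same route as the paper's first argument: invoke Carlson's prescription to build the MHS from the class $\Alb_{X,\V}(x)$, then verify flatness and Griffiths transversality exactly as in the $\V=\C$ case (Proposition~\ref{2.18}), using the Hodge decomposition $H^1(X,\V_\C)\cong H^0(\Om^1_{\ol X}\otimes\V)\oplus\ol{H^0(\Om^1_{\ol X}\otimes\V)}$ with both summands consisting of closed forms.

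The paper also supplies a second, more abstract variant you did not write down: given any extension $0\to A\to B\to C\to 0$ with $A,C$ constant Hodge structures and extension class $e_x$, one writes the connection on $B$ in block form $\bigl(\begin{smallmatrix}\nabla_A & \om\\ 0 & \nabla_C\end{smallmatrix}\bigr)$, observes that integrability is $\nabla_{\Hom(C,A)}\om=0$ and horizontality is $\nabla_{\Hom(C,A)}e\in F^{-1}\Hom(C,A)\otimes\Om^1_X$ modulo $\nabla F^0$, and then checks both hold by taking $\om$ to be the class of $\nabla_{\Hom(C,A)}e$. This packages the horizontality check you flagged as the ``main obstacle'' into a single general statement and sidesteps the explicit type-by-type bookkeeping; your direct computation of $\partial_\xi\Alb_{X,\V}(x)(\om)=\om(\xi)$ is the concrete instance of that abstract condition.
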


\begin{proof}
As was the case for Proposition \ref{2.18}, the construction of a \mhs\ from a class in $H^0(\Om^1_{\ol X}  \V)^\ast \otimes V_{x_0}/\wt\rho(\pi_1(X,x_0))$ may be done following the prescription in \cite{Car80}.  Using the same argument as in the $\V=\C$  we obtain a local system (flat vector bundle) and horzontality (transversality) of the family of \mhs s parametrized by $x\in X$ as consequences of
\[
H^1(X,\V_\C)\cong H^0(\Om^1_{\ol X} \otimes \V)\oplus \ol{H^0(\Om^1_{\ol X}\otimes \V)}\]
where the differential forms appearing in both summands on the right-hand side are closed.

Alternatively we may proceed as follows.  In general suppose we have over $X$
\[
0\to A\to B\to C\to 0\]
where $A,C$ are Hodge structures and $B$ is a family of \mhs s $B_x$ defined by
\[
e_x\in \frac{\Hom_\C(C,A)}{F^0\Hom_C(C,A)+\Hom_\Z(C,A)}.\]
The connection on $B$ is
\[
\nabla_B=\bpm
\nabla_A&\om\\
0&\nabla_C\epm\]
where
\[
\om\in \Hom_\C(C,A)\otimes \Om^1_X.\]
From 
\[
\nabla^2_B=\bpm
\nabla_A & \nabla_A\om+\om\nabla_C\\
0&\nabla^2_C\epm
\]
to have integrability we need $\nabla_A \om +\om \nabla_B =0$, which is
\[
\nabla_{\Hom(C,A)} \om=0.\tag{i}\]
For horizontality a calculation shows that we need
\[
\nabla_{\Hom(C,A)} e \in \frac{F^{-1}\Hom(C,A)\otimes \Om^1_X}{\nabla F^0\Hom(C,A)}.\tag{ii}\]
This is well defined since $\nabla F^0\subseteq F^{-1}$.  Taking 
\[
\om = \frac{\nabla_{\Hom(C,A)} e}{\nabla_{\Hom(C,A)}F^0\Hom(C,A)}\]
we see that both (i) and (ii) are satisfied.\end{proof}

For $\V\to X$ as in Proposition \ref{3.16}
 we have the associated local system $\End(\V)$.  We note that here there are two interpretations of $H^1(X,\End(\V))$.  One is as was just discussed in classifying extensions
 \begin{subequations}
 \begin{equation}\lab{3.18a} 0\to \End(\V)\to \M\to \End(\V)\to 0.\end{equation}    
 The other is as first order deformations of $\V\to X$.  One may think of the latter as local systems over a scheme $(X,\cO_{X,1})$ where $\cO_{X,1}$ is an extension of $\cO_X$ by nilpotents of order~2.  In the case at hand we may take the universal first order deformations to be the middle term in an exact sequence
 \begin{equation}
 \lab{3.18b}
 0\to \V\otimes H^1(X,\End(\V))^\ast \to \V_1\to \V\to 0.\end{equation}
 \end{subequations}
 The extension class defining this sequence is the tautological class given by the identity in
 \[
 H^1(X,\Hom(\V,\V\otimes H^1(X,\End(\V))^\ast)\cong H^1(X,\End(\V))\otimes H^1(X,\End(\V))^\ast\]
 and $\cO_{X,1}=\cO_X[H^1(X,\End(\V))^\ast]$.
 
 \begin{Prop}\lab{3.19}
 There is a VMHS over the scheme $(X,\cO_{X,1})$ whose restriction to $(X,\cO_X)$ is the VMHS defined by the class
 \[
 \Alb_{X,\End(\V)}\subset \Ext^1_{\MHS}(\V,\V\otimes H^1(X,\End(\V))^\ast).\]
 \end{Prop}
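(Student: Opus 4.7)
The plan is to promote the admissible VMHS over $X$ defined by $\Alb_{X,\End(\V)}$ to a VMHS over the infinitesimally thickened scheme $(X,\cO_{X,1})$, using the fact that its underlying local system is tautologically the universal first-order deformation $\V_1$ of (3.18b).

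First, I would apply the construction behind Proposition \ref{3.16} with the extension class $\Alb_{X,\End(\V)}\in \Ext^1_{\MHS}(\V,\V\otimes H^1(X,\End(\V))^\ast)$. The same recipe---Carlson's formula for the extension data, Deligne's prescription for the Hodge filtration, and the integrability/horizontality calculations (i)--(ii) from the proof of Proposition \ref{3.16}---produces an admissible VMHS whose underlying local system fits in the extension (3.18b). By the identification of $\Alb_{X,\End(\V)}$ with the tautological class in $H^1(X,\End(\V))\otimes H^1(X,\End(\V))^\ast$, this local system is precisely the universal first-order deformation $\V_1$.

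Second, I would endow $\V_1$ with its natural structure of a locally free $\cO_{X,1}$-module: an element $h\in H^1(X,\End(\V))^\ast$ acts as the square-zero endomorphism
\[
\V_1\twoheadrightarrow \V\xrightarrow{\otimes h}\V\otimes H^1(X,\End(\V))^\ast\hookrightarrow \V_1.
\]
The Hodge and weight filtrations furnished by the first step are $\cO_{X,1}$-compatible, since the sub and quotient local systems carry the filtrations inherited from $\V$ and from $\V\otimes H^1(X,\End(\V))^\ast$ (the latter via the pure Hodge structure on $H^1(X,\End(\V))$ from $X=\ol X$ projective), and the nilpotent action respects the associated graded. The flat connection and the filtration then extend $\cO_{X,1}$-linearly via the identification $\Omega^1_{(X,\cO_{X,1})}\cong \Omega^1_X\otimes_{\cO_X}\cO_{X,1}$, with the infinitesimal parameters treated as scheme-theoretic constants. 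Restriction along $(X,\cO_X)\hookrightarrow(X,\cO_{X,1})$---i.e.\ modding out by the nilpotent ideal---returns exactly the VMHS determined by $\Alb_{X,\End(\V)}$.

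The main obstacle is the compatibility of Griffiths transversality, $\nabla F^p\subset F^{p-1}\otimes\Omega^1_{(X,\cO_{X,1})}$, with the nilpotent thickening: one must ensure that the de Rham differential on $\cO_{X,1}$ does not generate spurious contributions along the nilpotent direction. The resolution is that $H^1(X,\End(\V))^\ast$ is treated as a space of scheme-theoretic constants, so $d$ acts on $\cO_{X,1}$ as $d_X$ extended by zero on the infinitesimal factor; transversality of the VMHS on $\V_1$ (which is the content of (i)--(ii) in Proposition \ref{3.16}) then propagates $\cO_{X,1}$-linearly, and admissibility at boundary components is inherited in the same way.
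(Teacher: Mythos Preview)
Your proposal is correct and follows essentially the same route as the paper: invoke Proposition~\ref{3.16} to produce the VMHS on $\V_1$ over $X$, then extend to $(X,\cO_{X,1})$ by treating $H^1(X,\End(\V))^\ast$ as nilpotent deformation parameters. The paper's argument is slightly more concrete at the key step: it writes the connection on $\V_1$ in block form
\[
\nabla_{\V_1}=\begin{pmatrix}\nabla_{\V\otimes H^1(\End(\V))^\ast}&\Om\\0&\nabla_{\V}\end{pmatrix},\qquad \Om\in H^0\bigl(\Om^1_X\otimes\Hom(\V,\V\otimes H^1(\End(\V))^\ast)\bigr),
\]
and observes that flatness over $\cO_{X,1}$ amounts to $\nabla_{\Hom}\Om+\tfrac12[\Om,\Om]=0$, which holds because $[\Om,\Om]$ is quadratic in the nilpotent parameters and hence vanishes on the first-order thickening. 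Your phrase ``propagates $\cO_{X,1}$-linearly'' is the same observation said abstractly; making the quadratic-vanishing explicit would sharpen your write-up.
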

 
 \begin{proof}
 The proof is the same as that for Proposition \ref{3.16} above.  We note that the connection form for $\V_1$ as a local system over $\cO_X$ is
 \[
 \nabla_{\V} = \bpm
 \nabla_{\V\otimes H^1(\End(\V))^\ast}&\Om\\
 0&\nabla_{\V}\epm\]
 where $\Om\in H^0(X,\Om^1_X \otimes \Hom(\V,\V\otimes H^1( \End(\V))^\ast)$.  Then
 \[
 \nabla^2_V=0\iff \nabla_{\Hom}\Om +\frac12 [\Om,\Om]=0\]
 where the $\Hom$ is $\Hom(\V,\V\otimes  H^1(\End(\V))^\ast)$. 
 
 As a local system over $\cO_{X,1}$ where $H^1(X,\End(\V))^\ast$ is treated as deformation parameters since $[\Om,\Om]$ is quadratic  we have $\nabla^2_{\V_1}=0$. \end{proof}
 
 For a subvariety $Y\subset X$ the mapping
 \[
 H^1(X,\End(\V))\to H^1(Y,\End(\V))\]
 is interpreted as restricting first order deformations of $\rho:\pi_1(X)\to\Aut(V)$ to $\pi_1(Y)$. With this interpretation we have completed the proof of Proposition \ref{new3.3} and its corollary.

 The above may be summarized as follows:
 \begin{demo}\lab{3.21}
 \emph{Given a local system $\V\to X$ supporting a VHS, denote by $\cO_{X,1}$ the scheme $\cO_X[H^1(X,\End(E))^\ast]$ obtained by adjoining to $\cO_X$ nilpotents of order 2.  Then
 \begin{enumerate}[{\rm (i)}]
 \item $\cO_{X,1}$ has a \mhs\ with
 \[
 \bcs
 \Gr^W_0 (\cO_{X,1})=\C,\\
 \Gr^W_1(\cO_{X,1})=H^1(X,\End(\V))^\ast\ecs\]
 and whose extension class is $\Alb_{X,\End(\V)}$, and 
 \item over $(X,\cO_{X_1})$ there is a VMHS as described in Proposition \ref{3.16}.\end{enumerate}}\end{demo}
 
 In the projective case the extension of the above to all orders of deformation of $\rho$ (Goldman-Millson theory) is carried out in \cite{EKPR12}.  Thus there are
 \beb
 \item a MHS on the completed local rings $\wh \cO_{\Def(\rho)}$ of the Kuranishi space $\Def(\rho)$; and
 \item a formal VMHS over $\Def(\rho)$.
 \eeb
  For further discussion of these results  cf.\ \cite{Pr17}, \cite{Pr19}, \cite{Le19} and \cite{Le21}.
 
 \section{Future directions}
 \setcounter{equation}{0}

In this section we will very briefly outline some future research directions.
We have  made an attempt  to  formulate a categorical version of 
Shafarevich conjecture in \cite{HKL17},  \cite{KL14}.
In those papers we outline new techniques of perverse sheaves of categories and functors between them. The categorical
information is recorded by the skeleton and the 
Lagrangian sheaves over it. We  give an interpretation 
of the infinite chain conjecture.
 
\begin{center}
    \begin{tikzpicture}
        \begin{scope}[shift={(0,3)}]
            \draw(0,-1) node{\underline{Allowed}}
            (-3,-0.6) node{$C$}(-3,0.6) node{1-core};

            \draw[-stealth] (-1,0) to (1,0);
            \draw (-4,0)--(-3,0)--(-2,0.5)(-3,0)--(-2,-0.5);
           \begin{scope}[shift={(3,0)}]

              \newdimen\R
        \R=1cm
            \draw (0:\R)
             \foreach \x in {60,120,...,360} {  -- (\x:\R) }
                 -- cycle (360:\R)
             -- cycle (300:\R) 
              -- cycle (240:\R) 
             -- cycle (180:\R) 
                -- cycle  (120:\R) 
                -- cycle  (60:\R) ;
                \draw (0,-1.2cm) node{2-core};
                \draw (120:\R) --(240:\R) (180:\R)--(-0.5,0)
                (-0.75,-0.45cm)--(0,-0.45cm) node[scale=0.8]{$\bullet$}--(0,-0.88cm)
                (0,-0.45cm)--(30:0.88cm)
                
                ;

           \end{scope}
        \end{scope}
        \begin{scope}[shift={(0,0)}]
            \draw(0,-1) node{\underline{Not Allowed}};
            \draw[-stealth] (-1,0) to (1,0);
            \draw (-4,0)--(-3,0)--(-2,0.5)(-3,0)--(-2,-0.5);
           \begin{scope}[shift={(3,0)}]

              \newdimen\R
        \R=1cm
            \draw (0:\R)
             \foreach \x in {60,120,...,360} {  -- (\x:\R) }
                 -- cycle (360:\R)
             -- cycle (300:\R) 
              -- cycle (240:\R) 
             -- cycle (180:\R) 
                -- cycle  (120:\R) 
                -- cycle  (60:\R) ;
                \draw (-0.2,-0.2) node[scale=0.8]{$\bullet$}--(0.2,-0.2) node[scale=0.8]{$\bullet$};
                \draw (-0.2,0.2) node[scale=0.8]{$\bullet$};
           \end{scope}
        \end{scope}
    \end{tikzpicture}
\end{center}
\begin{Conj}[Categorical Shafarevich]
    Let $X$ be a quasi-projective variety with infinite (nilpotent) fundamental group. Assume that for any curve $C$ the functor
    \[F_{\text{wrapped}} (C) \to F_{\text{wrapped}} (X)\]
    is allowed. Then $X$ is holomorphically convex.
\end{Conj}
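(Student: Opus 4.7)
The plan is to bridge the categorical ``allowed'' condition to the homological hypotheses underlying Theorem A. My starting point would be to interpret the wrapped Fukaya category $F_{\text{wrapped}}(X)$ as recording, via its skeleton and Lagrangian sheaves, enough of the Betti and Hodge-theoretic information on $X$ to detect the monodromy map $\pi_1(C) \to \pi_1(X)$ for curves $C \subset X$. Specifically, a functor between wrapped categories being ``allowed'' (of $1$-core type, in the notation of the diagram) should force any embedded curve whose image collapses cores to satisfy the topological triviality condition that the restriction $H^1(X,\Q) \to H^1(C,\Q)$ vanishes; forbidden (higher-core) configurations should correspond precisely to the existence of nontrivial cycles in $\pi_1(C) \to \pi_1(X)$.

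Granting that translation, the proof would proceed in three main steps. First, reduce the holomorphic convexity statement, via the general \Shaf-mapping machinery of Section I, to the construction of a proper map from $X$ whose fibres $Y$ satisfy $\pi_1(Y) \to \pi_1(X)$ finite. Second, since $\pi_1(X)$ is nilpotent, invoke Theorem A: it suffices to produce a higher Albanese mapping $\alpha_s : X \to \Alb^s(X)$ which is proper, and to verify the cohomological criterion \eqref{1.7} on the fibres. Third, use the ``allowed'' hypothesis on all curves to build a psh exhaustion on $\wt\alpha_s(\wt X)$ exactly as in \eqref{2.2}, by translating the Lagrangian-sheaf data on the skeleton into a system of logarithmic $1$-forms $\om_\la \in H^0(\Om^1_{\ol X}(\log Z))$ whose integrals give the coordinates on the universal cover of $\Alb^s(X)$.

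The cohomological step reduces, by the nodal-curve devissage of \eqref{2.1} and the minimal-model argument surrounding Proposition \ref{2.22}, to showing that for each irreducible component $Y_\alpha$ of a fibre, the logarithmic $1$-forms $\om_{-1}, \om_{-2}, \dots$ of the minimal model restrict trivially to $Y_\alpha$. Here the ``allowed'' hypothesis on curves enters decisively: it should rule out the infinite-chain behaviour pictured in the diagram (the forbidden $2$-core) and thereby force the circuit class $\ga$ in $\ker\part \subset H_1(Y,D)$ to be of finite order in $\widehat{\pi_1(X)}$, via the weight argument from the first proof of Theorem A.

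The hard part, and indeed the conceptual crux, will be establishing the dictionary between the combinatorial ``allowed/not-allowed'' condition on Lagrangian skeleta and the Hodge-theoretic triviality condition $H^1(X,\Q) \to H^1(Y,\Q) = 0$. Rigorously identifying the categorical invariant computed by $F_{\text{wrapped}}(C) \to F_{\text{wrapped}}(X)$ with a fragment of the \mhs\ on $\widehat{\cO(\pi_1(X,x))}$ discussed after \eqref{3.3} will require substantial input from the perverse-sheaf-of-categories framework of \cite{HKL17,KL14}. Once that identification is in place, the conclusion follows by combining Theorem A with Theorem C applied to the VMHS attached to the higher Albanese tower.
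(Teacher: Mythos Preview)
The statement you are attempting to prove is a \emph{Conjecture}, not a theorem: the paper does not supply a proof, and presents it in Section~IV explicitly as a future direction. So there is no ``paper's own proof'' to compare against.

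That said, your proposal has a genuine gap even as a proof strategy. The entire argument rests on the dictionary you describe in your final paragraph: identifying the ``allowed'' condition on $F_{\text{wrapped}}(C)\to F_{\text{wrapped}}(X)$ with the Hodge-theoretic vanishing $H^1(X,\Q)\to H^1(Y,\Q)=0$ (or equivalently with the properness of the Albanese map). You acknowledge this is ``the hard part,'' but in fact it is \emph{the whole content of the conjecture}. Once that dictionary is granted, the remaining steps are literally the statement and proof of Theorem~A, so nothing new is being proved. The paper's own text makes clear that the notion of ``allowed'' is defined combinatorially via cores and skeleta in the perverse-sheaves-of-categories framework of \cite{HKL17,KL14}, and that the link to the classical Shafarevich machinery is conjectural. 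Your proposal does not supply any mechanism for establishing that link; it simply assumes it and then invokes Theorem~A.

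A secondary issue: you assume the Albanese map $\alpha$ is proper in order to apply Theorem~A, but nothing in the categorical hypothesis as stated guarantees this. The paper's Theorem~A requires properness as an explicit assumption, and part of what the conjecture is implicitly asking is whether the categorical condition can replace or imply it.
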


Here allowed maps  are maps those which map the core of the category of an open 
Riemann surface in the connected 1 dimensional subcore of the quasipojective variety. Details can be found in
the above references.

The above conjecture  suggests

\begin{Conj}
    Let $X$ be quasi-projective. Assume that every proper morphism $f$ from a smooth curve $C$ to $X$ has a meridian of infinity of infinite order in $\pi_1(X)$. Then is $X$  holomorphically convex?
\end{Conj}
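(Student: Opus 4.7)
The plan is to reduce the conjecture to Theorem C by constructing an admissible variation of \mhs\ on $X$ whose associated period mapping is both proper and has finite-kernel monodromy. The working hypothesis is that the meridian-at-infinity condition is precisely the input needed to force properness, since properness of the mappings in Theorems B and C is controlled by the requirement that the monodromy $N_i$ around each boundary component $Z_i$ be of infinite order. Accordingly, I would first fix a smooth projective completion $\ol X$ with $Z = \ol X \setminus X = \cup Z_i$ a simple normal crossing divisor; applying the hypothesis to a generic smooth curve $C$ meeting $Z_i$ transversally in a single point, each boundary loop $\mu_i \in \pi_1(X)$ is of infinite order.

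The natural first attempt is to argue in the setting of Theorem A. One would try to show that the image of some power of $\mu_i$ in a nilpotent quotient $\pi_1(X)/J^{s+1}$ is of infinite order, then deduce properness of a higher Albanese mapping $\alpha_s$ in \eqref{1.4b} from the properness criterion recalled there (logarithmic $1$-forms with nontrivial residue at every stratum), and conclude via Theorem A. Along a fibre $Y$ of $\alpha_s$ one uses the $\part\ol\part$-lemma \ref{2.23} to propagate vanishing of $\om|_Y$ through the tower of forms in Proposition \ref{2.22}, exactly as in the final step of the proof of Theorem A. This closes the argument whenever $\pi_1(X)$ is residually nilpotent and the meridians survive in some nilpotent quotient.

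The main obstacle is that the conjecture assumes nothing on $\pi_1(X)$ beyond the meridian condition: we are not given residual nilpotence, linearity of $\pi_1(X)$, or even that $\mu_i$ is nonzero in $H_1(X,\Q)$. A meridian of infinite order in the abstract group $\pi_1(X)$ can perfectly well die in every nilpotent (indeed, every solvable) quotient. To repair this one would replace the UVMHS of Section II by the \mhs\ on Hain's relative completion $\cG$ with respect to some auxiliary semisimple representation $\rho: \pi_1(X) \to G$, as in the diagram \eqref{3.3}, and then take the corresponding period mapping $\Phi_M$. Once such a representation exists with $\rho(\mu_i)$ of infinite order for all $i$, the meridian hypothesis forces properness of $\Phi_M$, $\rho_M$ has finite kernel by construction, and Theorem C yields at once that $\Phi_M$ is a \Shaf\ mapping, so that $\wt X$ is holomorphically convex.

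The hard part — and the reason the statement is posed as a conjecture rather than a theorem — is the production of such a $\rho$ from the meridian hypothesis alone. This is essentially the Tannakian content of condition (b) from the introduction: one needs a linear representation of $\pi_1(X)$ that detects every meridian, starting only from the fact that the meridians have infinite order in the abstract group. Non-abelian Hodge theory in the sense of \cite{Si88}, \cite{Si92}, \cite{EKPR12} and the quasi-projective refinements of \cite{Le19}, \cite{Le21} would seem to be the natural source, possibly combined with a compactness or exhaustion argument over the finitely many $Z_i$'s. Without such an input, the reduction to Theorem C is genuinely blocked; with it, Theorem C does the rest.
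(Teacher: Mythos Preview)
The paper does not prove this statement: it appears in Section~IV (\emph{Future directions}) as an open conjecture, with only the remark that ``successful steps in this direction were taken by P.~Eyssidieux and R.~Aguilar.'' There is therefore no proof in the paper to compare your proposal against.

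Your proposal is not a proof either, and to your credit you say so explicitly. Your diagnosis of the obstruction is accurate: the meridian-at-infinity hypothesis is a statement about the abstract group $\pi_1(X)$, while Theorems~A, B, and~C all require as input either residual nilpotence or a linear representation with finite kernel in which the boundary monodromies have infinite order. Nothing in the hypothesis produces such a representation, and a meridian of infinite order in $\pi_1(X)$ can vanish in every linear (or nilpotent) quotient. Your proposed reduction to Theorem~C is exactly the shape one would want, but the step ``construct $\rho$ detecting all meridians from the meridian hypothesis alone'' is the entire content of the conjecture, not a lemma on the way to it. So the gap you name is genuine and, as far as the paper is concerned, open.

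One small correction to your first paragraph: applying the hypothesis to a curve $C$ meeting $Z_i$ transversally tells you that the image of a meridian of $C$ at infinity has infinite order in $\pi_1(X)$; it does not by itself identify that element with the local meridian $\mu_i$ around $Z_i$ unless you control how $\pi_1(C)\to\pi_1(X)$ behaves near the boundary. This is a minor point compared to the main obstruction, but it would need attention in any actual argument.
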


Successful steps in this direction were taken by P. Eyssidieux and R. Aguilar. 

Other  considerations naturally lead to.

\begin{Quest}
Assume that $X$ is Picard hyperbolic and $\pi_1(X)$ is nilpotent. 
Does this imply  that $X$ is holomorphically convex?
\end{Quest}

More plausible may be a positive answer to  the following:

\begin{Quest}
Assume that $X$ is Picard hyperbolic and $\pi_1(X )$ is a big fundamental group in the sense of \cite{Br22}. Does this imply that
$X$ holomorphically convex (Stein).
\end{Quest}

  This question extends the work of  loc.\ cit.\ who considered 
  the case of a projective $X=\ol X$. In 
 fact it will be interesting to investigate how the following three notions are related.

\begin{center}
    \begin{tikzpicture}
        \node [draw,rounded corners=0.2cm,clip] (a) at (0,0) {$X$ is
hyperbolic};
        \node [draw,rounded corners=0.2cm,clip] (b) at (6,0) {$X$ is 1-formal};
        \node [draw,rounded corners=0.2cm,clip] (c) at (3,-2) {$\widetilde{X}$ is holomorphically convex};
        \path[stealth-stealth] (a) edge node[above]{?} (b)   (a) edge node[above]{?} (c) (b) edge node[above]{?} (c);
    \end{tikzpicture}
\end{center}

Recently several outstanding results were obtained in direction
of algebraicity of the period map in the case of variations
of mixed Hodge structures;  see \cite{BBT06}. 
It seems likely that one can drop
 the condition about infinite monodromy around $2$. We would like to pose the following.

\begin{Quest}
    Let $X$ be a quasi-projective variety such that  there is an inclusion
    \[\pi_1(X)  \hookrightarrow{} \GL(n,\C). \]
    Then is $\widetilde{X}$  holomorphically convex?

\end{Quest}

Several results in this direction were obtained by B. Brunebarbe and Y. 
Deng. Similarly 
in the case when the
fundamental
group $\pi_1(X)$
is subgroup of a nilpotent group of $\Nil p$ we have  the following:

\begin{Quest}
     Let $X$ be a quasi-projective variety such that 
    \[\pi_1(X)  \hookrightarrow{} \Nil p. \]
    Then is $\widetilde{X}$  holomorphically convex?
\end{Quest}

\end{document}